\patchcmd{\subsection}{-.5em}{.5em}{}{}
\patchcmd{\subsubsection}{-.5em}{.5em}{}{}
\numberwithin{equation}{section}
\newcommand{\cC}{\mathcal{C}}
\newcommand{\bR}{\mathbb{R}}
\newcommand{\bZ}{\mathbb{Z}}
\newcommand{\ra}{\rightarrow}
\newcommand{\qand}{\quad \textrm{and} \quad}
\newcommand\subsetsim{\mathrel{%
\ooalign{\raise0.2ex\hbox{$\subset$}\cr\hidewidth\raise-0.8ex\hbox{\scalebox{0.9}{$\sim$}}\hidewidth\cr}}}
\theoremstyle{theorem}
\newtheorem{theorem}{Theorem}[section]
\newtheorem{corollary}[theorem]{Corollary}
\newtheorem{proposition}[theorem]{Proposition}
\newtheorem{lemma}[theorem]{Lemma}
\newtheorem*{bd}{Borel Density Theorem}
\newtheorem*{sd}{Dani--Shalom Density Theorem}
\newtheorem*{mt}{Main Theorem}
\theoremstyle{definition}
\newtheorem{definition}[theorem]{Definition}
\newtheorem{remark}[theorem]{Remark}
\newtheorem{example}{Example}
\renewcommand\labelenumi{(\roman{enumi})}
\renewcommand\theenumi\labelenumi 
\begin{document}
\bibliographystyle{plain} 

\title{Borel density for approximate lattices}

\author{Michael Bj\"orklund}
\address{Department of Mathematics, Chalmers, Gothenburg, Sweden}
\email{micbjo@chalmers.se}

\author{Tobias Hartnick}
\address{Mathematisches Institut, Justus-Liebig-Universit\"at Gießen, Germany}
\email{tobias.hartnick@math.uni-giessen.de}

\author{Thierry Stulemeijer}
\address{Mathematisches Institut, Justus-Liebig-Universit\"at Gießen, Germany}
\email{thierry.stulemeijer@math.uni-giessen.de}

\keywords{Approximate lattices, approximate groups, Borel density, ergodic joinings}

\subjclass[2010]{Primary: 22D40 ; Secondary: 20P05, 20G25}

\maketitle
\begin{abstract} We extend classical density theorems of Borel and Dani--Shalom on lattices in semi-simple, respectively solvable algebraic groups over local fields to approximate lattices.  Our proofs are based on the observation that Zariski closures of approximate subgroups are close to algebraic subgroups. Our main tools are stationary joinings between the hull dynamical systems of discrete approximate subgroups and their Zariski closures.
\end{abstract}

\section{Introduction}
Borel's density theorem \cite{Borel} is a cornerstone of the theory of lattices in semisimple algebraic groups over local fields, and can be stated as follows.
\begin{bd}
Let $k$ be a local field and let ${\bf G}$ be a connected semisimple algebraic group over $k$. If ${\bf G}(k)$ does not have any compact factors, then every lattice $\Gamma<{\bf G}(k)$ is Zariski dense.
\end{bd}
Here and in the sequel, ${\bf G}(k)$ is considered as a topological group with respect to its natural Hausdorff group topology, which turns ${\bf G}(k)$ into a locally compact second countable (lcsc) topological group. A similar density theorem for lattices in solvable algebraic groups was etablished by Dani \cite{Dani} (for $k=\bR$) and Shalom \cite{Shalom} (for general local fields). Here a solvable algebraic group ${\bf G}$ over a field $k$ is called \emph{$k$-split} if every composition factor is isomorphic over $k$ to the additive or multiplicative group of $k$. For example, unipotent algebraic groups over fields of characteristic $0$ are $k$-split, but they need not be $k$-split in positive characteristic (see Subsection \ref{SecUnipotent}).
\begin{sd}
Let $k$ be a local field and let ${\bf G}$ be a connected solvable algebraic group over $k$. If ${\bf G}$ is $k$-split, then every lattice $\Gamma < {\bf G}(k)$ is Zariski dense.
\end{sd}
In this article we generalize these density theorems to (certain) approximate lattices. Approximate lattices are certain discrete approximate subgroups (in the sense of Tao \cite{Tao08}) of locally compact groups. They were introduced in \cite{BH1} as generalizations of Meyer sets in abelian lcsc groups \cite{Meyer} and further studied in \cite{BHP1, BH2, BH3, Machado}. By definition, a \emph{uniform approximate lattice} $\Lambda \subset G$ is a discrete approximate subgroup which is  cocompact in the sense that $G = \Lambda K$ for a compact subset $K\subset G$. More generally, \emph{approximate lattices} are defined by the existence of non-trivial stationary measures on an associated hull dynamical system,  and \emph{strong approximate lattices} are those approximate lattices, for which the hull even admits an invariant measure (see Section \ref{SecPrelim} for precise definitions). With this terminology understood, our main result can be stated as follows:
\begin{mt} Let $k$ be a local field and let ${\bf G}$ be a connected algebraic group over $k$. Assume that either ${\bf G}$ is semisimple and ${\bf G}(k)$ does not have any compact factors, or that ${\bf G}$ is solvable and $k$-split. Then every strong approximate lattice $\Lambda \subset G$ and every uniform approximate lattice $\Lambda \subset G$ is Zariski dense.
\end{mt}
By definition, a subgroup $\Gamma<G$ is a uniform approximate lattice iff it is a uniform lattice. We will show in Subsection \ref{SecNonUniform} below that a subgroup $\Gamma<G$ is an approximate lattice iff it is a strong approximate lattice iff it is a lattice. Thus our main theorem is indeed a proper generalization of the classical density theorems.

Our proof is inspired by Furstenberg's proof of Borel density \cite{Furstenberg}, which can be sketched as follows: If $\Gamma$ is a lattice in $G= {\bf G}(k)$ and $H$ denotes the Zariski closure of $\Gamma$ in $G$, then the invariant probability measure on $G/\Gamma$ pushes forward to an invariant probability measure on $G/H$, which by Chevalley's theorem can be realized as a quasi-projective variety. Using reccurence properties of unipotents on projective space with respect to the invariant measure at hand one then deduces that $G/H$ must be a point.

This approach does not apply directly to our more general setting for several reasons: Firstly, the Zariski closure of an approximate lattice $\Lambda \subset G$ is not a group. It is however, in a sense made precise in Theorem \ref{AASMain} below, close to an algebraic subgroup $H$ of $G$. We would thus like to connect a stationary measure on the hull of $\Lambda$ (which serves as a natural replacement for the homogeneous space $G/\Gamma$ in the group case) to a measure on $G/H$. Unlike the group case, we cannot embed the hull of $\Lambda$ into $G/H$, but we can use a stationary joining between the hull and $G/H$ to obtain a measure on $G/H$. A crucial difference to the group case will be that the measure obtained on $G/H$ will in general not be invariant, but only stationary. To obtain the desired conclusion, we thus need to investigate further properties of the measure in question. In this final step we also need information concerning maximal algebraic subgroups of semisimple groups over local fields as provided by Stuck \cite{St}. 

This article is organized as follows: In Section \ref{SecPrelim} we recall the precise definitions of strong and uniform approximate lattices. We use this opportunity to establish a number of basic results concerning hull dynamical systems, which will be used throughout the article. In Section \ref{SecAlgebraicApproximateGroups} we show that Zariski closures of approximate subgroups are again approximate subgroups, and that such ``algebraic'' approximate subgroups are close to algebraic subgroups. This statement is made precise in Theorem \ref{AASMain}, which is the main result of this section. In Section \ref{SecMain} we use this result to deduce Borel density, first in the uniform case, and then in the strong case. In Section \ref{SecVariants} we derive Dani--Shalom density and discuss various variants and refinements.

 Appendix \ref{AppProjection} contains some background concerning the existence of stationary joinings. Appendix \ref{AppUnimodular} generalizes the unimodularity theorem from \cite{BH1} to the case of non-compactly generated groups; this is used in the proof of the main theorem in the uniform case.

Throughout this article we use the following convention: If $k$ is a local field and ${\bf G}$ is a linear algebraic group over $k$, then all topological terms (e.g.\ closure, compactness etc.) concerning subsets of $G := {\bf G}(k)$ refer to the Hausdorff topology on $G$ and not to the Zariski topology, unless explicitly mentioned otherwise.

{\bf Acknowledgement.} We are indebted to Oliver Sargent for challenging us to establish Borel density for approximate lattices. We thank JLU Gie\ss en for providing excellent working conditions during several visits of the first-named author. 

\section{Approximate lattices and their hulls}\label{SecPrelim}

\subsection{Uniform approximate lattices}
Let $G$ be a group. Given subsets $A, B \subset G$ we denote by 
\[
AB := \{ab\in G \mid a \in A, b \in B\} \qand A^{-1} := \{a^{-1} \in G \mid a \in A \} 
\]
their product set, respectively set of inverses. We also define $A^{k} := A^{k-1}A$ for $k \geq 2$. (For distinction we write $A^{\times k}$ for the $k$-fold Cartesian product.)

We recall that if $G$ is a group, then a subset $\Lambda \subset G$ is called an \emph{approximate subgroup} if it is symmetric, contains the identity and satisfies 
$\Lambda^2 \subset F\Lambda$ for a finite subset $F \subset G$. Since $\Lambda$ is symmetric, this implies $\Lambda^2 \subset \Lambda F^{-1}$, and hence we may choose a finite set $F_\Lambda$ such that
$\Lambda^2 \subset F_\Lambda \Lambda \cap \Lambda F_\Lambda$, and hence
\begin{equation}\label{FLambda}
\Lambda^k \subset F_\Lambda^{k-1} \Lambda \cap \Lambda F_\Lambda^{k-1}\quad \text{for all }k \geq 2.\end{equation}

If $G$ is a lcsc group, then a subset $P \subset G$ is called \emph{(left-)relatively dense} if there exists a compact subset $K \subset G$ such that $G = PK$. It is called \emph{uniformly discrete} if $e$ is not an accumulation point of $P^{-1}P$. An approximate subgroup $\Lambda \subset G$ is called a \emph{uniform approximate lattice}, if it is relatively dense and discrete. In this case, also $\Lambda^k$ is a uniform approximate lattice for all $k \in \mathbb N$, and in particular $\Lambda$ is uniformly discrete. 

Note that a subgroup of $G$ is a uniform approximate lattice if and only if it is a uniform lattice. We now proceed towards the definition of non-uniform approximate lattices, which generalize non-uniform lattices in a similar way.

\subsection{The Chabauty-Fell topology}

Given a lcsc space $X$ we denote by $\mathcal C(X)$ the collection of closed subsets of $X$ with the Chabauty-Fell topology, i.e.\ the topology on $\mathcal C(X)$ generated by the basic open sets 
\[
U_K = \{A \in \mathcal C(X) \mid A \cap K = \emptyset\} \quad \text{and} \quad U^{V} = \{A \in \mathcal C(X) \mid A \cap V \neq \emptyset\},
\]
where $K$ runs over all compact subsets of $X$ and $V$ runs over all open subsets of $X$. 

Under the present assumptions on $X$, the space $\mathcal C(X)$ is a compact metrizable space (see e.g. \cite[Prop. 1.7 and Prop. 1.8]{Paulin}), and in particular its topology is characterized by convergence of sequences in $\mathcal C(X)$. A sequence $ (F_i) $ in $ \mathcal{C}(X) $ converges if and only if the two following conditions are satisfied:
\begin{enumerate}[(CF1)]
\item For all $ x\in F $ there exist $ x_i \in F_i $ such that $ (x_i) $ converges to $ x $.
\item If $x_i \in F_i$ for all $i \in \mathbb N$ then every accumulation point of the sequence $(x_i)$ is contained in $F$.
\end{enumerate}
We derive two consequences: Firstly, if a $G$ acts jointly continuously on $X$, then it acts jointly continuously on $\mathcal C(X)$ by $g.A := \{ga\mid a \in A\}$. Secondly, taking finite unions is continuous in the Chabauty--Fell topology: 
\begin{corollary}\label{Cor:continuity of the union map}
For every lcsc space $X$ the map $ \pi \colon \mathcal{C}(X)^{\times k}\to \mathcal{C}(X)$,  $(F_1,\dots ,F_k)\mapsto F_1\cup \dots \cup F_k$ is continuous.
\end{corollary}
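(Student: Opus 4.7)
The cleanest route is to verify continuity directly on the defining sub-basis of the Chabauty--Fell topology, rather than juggling sequences via (CF1) and (CF2). Since finite products of continuous maps are continuous, it suffices by induction to treat $k = 2$; but the sub-basis argument below handles all $k$ simultaneously with no extra effort, so I would just do the general case.

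Recall that the topology on $\mathcal{C}(X)$ is generated by the two families
\[
U_K = \{A \in \mathcal{C}(X) \mid A \cap K = \emptyset\}, \qquad U^V = \{A \in \mathcal{C}(X) \mid A \cap V \neq \emptyset\},
\]
with $K \subset X$ compact and $V \subset X$ open. It is therefore enough to show that $\pi^{-1}(U_K)$ and $\pi^{-1}(U^V)$ are open in $\mathcal{C}(X)^{\times k}$ for all such $K$ and $V$.

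For the first family, observe that $(F_1 \cup \dots \cup F_k) \cap K = \emptyset$ iff $F_j \cap K = \emptyset$ for every $j$, so
\[
\pi^{-1}(U_K) = U_K \times \dots \times U_K,
\]
which is a product of open sets and hence open. For the second family, $(F_1 \cup \dots \cup F_k) \cap V \neq \emptyset$ iff $F_j \cap V \neq \emptyset$ for some $j$, so
\[
\pi^{-1}(U^V) = \bigcup_{j=1}^{k} \bigl(\mathcal{C}(X) \times \dots \times \mathcal{C}(X) \times U^V \times \mathcal{C}(X) \times \dots \times \mathcal{C}(X)\bigr),
\]
with $U^V$ in the $j$-th slot. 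Each summand is open in the product topology, and a finite union of open sets is open.

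I do not anticipate a real obstacle here; the only thing to be careful about is not to invoke sequential arguments unnecessarily, since the sub-basis check is both shorter and yields joint continuity (as opposed to separate continuity) at once. An alternative, if one prefers the sequential viewpoint already established in the excerpt, is to check (CF1) and (CF2) for $F_i^{(1)} \cup \dots \cup F_i^{(k)}$ directly: (CF1) is immediate from the corresponding (CF1) for each factor, and (CF2) follows by passing to a subsequence and the pigeonhole principle to concentrate in a single $F_i^{(j)}$, whence the accumulation point lies in $F^{(j)} \subset F^{(1)} \cup \dots \cup F^{(k)}$. Either approach yields the claim, and I would present the sub-basis version as the main proof.
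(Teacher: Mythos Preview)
Your proof is correct. The sub-basis argument is clean and entirely valid: preimages of the generators $U_K$ and $U^V$ are manifestly open in the product, exactly as you wrote.

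The paper takes the sequential route you sketch as an alternative: it fixes a convergent sequence $((F_{1,i},\dots,F_{k,i}))_i \to (E_1,\dots,E_k)$ and verifies (CF1) and (CF2) for the sequence of unions, using the pigeonhole principle for (CF2) just as you describe. So your ``alternative'' is in fact the paper's proof, and your main proof is the one the paper does not give. The sub-basis check has the advantage of not relying on the sequential characterization of the Chabauty--Fell topology, hence not on second countability or metrizability of $\mathcal{C}(X)$; it would go through verbatim for the Fell topology on $\mathcal{C}(X)$ over an arbitrary locally compact Hausdorff space. The paper's sequential argument, on the other hand, fits the local narrative (the conditions (CF1) and (CF2) have just been introduced), at the cost of a slightly longer verification. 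Either is perfectly adequate here.
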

\begin{proof}
Let $ ((F_{1,i},\dots ,F_{k,i}))_{i\geq 1} $ be a sequence in $ \mathcal{C}(G)^{\times k} $ converging to $ (E_1,\dots ,E_k) $, and abbreviate $F_i := F_{1,i}\cup \dots \cup F_{k,i}$ and $E :=  E_1\cup \dots \cup E_k$. We have to show that $F_i \to E$; for this we check Conditions (CF1) and (CF2):

(CF1) If $x \in E$, then $x \in E_j$ for some $j \in \{1, \dots, k\}$. Since $F_{j,i} \to E_j$ there thus exist $ x_i\in F_{j,i} \subseteq F_i $ such that $x_i \to x$.

(CF2) Let $x_i\in F_i$ and let $x \in X$ be an accumulation point of $(x_i)$, say $x_{n_i} \to X$. Passing to a further subsequence we may assume by the pigeon hole principle that $x_{n_i} \in F_{j, n_i}$ for some $j \in \{1, \dots, k\}$. Since $F_{j, i} \to E_j$ it then follows that $x \in E_j \subset E$.
\end{proof}

\subsection{Hulls of closed subsets}
Let $G$ be a lcsc group. We refer to a jointly continuous action of $G$ on a compact space $\Omega$ as a \emph{topological dynamical system (TDS)} and to a continuous $G$-equivariant map between TDSs as a factor map. If $G \curvearrowright \Omega$ is a TDS, then so ist the orbit closure of every element of $\Omega$, and every factor map maps an orbit closure of an element onto the orbit closure of its image. 

By the results recalled in the previous subsection, the left-action of a lcsc group $G$ on itself induces a TDS $G \curvearrowright \mathcal C(G)$, $(g,A) \mapsto gA$, and more generally the diagonal action of $G$ on $G^{\times n}$ induces a TDS $G \curvearrowright \mathcal C(G)^{\times n}$, $(g, (A_1, \dots, A_n)) \mapsto (gA_1,\dots, gA_n)$. We are going to consider orbit closures in these TDSs.
\begin{definition} Let $G$ be a lcsc group and let $P, P_1, \dots, P_n \in \mathcal C(G)$.
\begin{enumerate}[(i)]
\item The \emph{(left-)hull} of a closed subset $P \subset G$ is defined as the orbit closure
\[
\Omega_P := 
\overline{\{gP \mid g \in G\}} \subset \mathcal C(G).
\]
\item The \emph{simultaneous (left-)hull} of $P_1, \dots, P_n$ is
\[
\Omega_{P_1, \dots, P_n} := \overline{\{(gP_1, \dots, gP_n) \mid g \in G\}} \subset \mathcal C(G)^{\times n}.
\]
\end{enumerate}
\end{definition}
In the case of closed subgroup $H<G$, the hull is a compactification of $G/H$, but it turns out to be the trivial compactification:
\begin{lemma}[Hulls of closed subgroups]\label{lemma2} If $H<G$ is a closed subgroup, then $\Omega_H \setminus \{\emptyset\} = G/H$.
\end{lemma}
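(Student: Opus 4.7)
The plan is to verify the two containments separately. The inclusion $G/H \subseteq \Omega_H \setminus \{\emptyset\}$ is essentially tautological: by definition $\Omega_H$ is the closure of the orbit $\{gH : g \in G\} = G/H$ inside $\mathcal C(G)$, and no coset $gH$ is empty. So the content of the lemma is the reverse inclusion, namely that the only new point picked up by taking the closure is the empty set.

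For the reverse inclusion, I would fix $F \in \Omega_H$ with $F \neq \emptyset$ and exhibit an $x \in G$ with $F = xH$. To this end, I pick any $x \in F$ and choose a sequence $(g_i)$ in $G$ with $g_i H \to F$ in $\mathcal C(G)$. Applying (CF1) to the point $x$ yields representatives $x_i \in g_i H$ with $x_i \to x$. The crucial observation is that $x_i \in g_iH$ implies $g_iH = x_iH$, so I may effectively replace the abstract cosets by their convergent representatives. From here the two inclusions fall out:
\begin{itemize}
\item \emph{$xH \subseteq F$}: for every $h \in H$, the sequence $x_i h$ lies in $x_i H = g_i H$ and converges to $xh$, so (CF2) forces $xh \in F$.
\item \emph{$F \subseteq xH$}: for $y \in F$, (CF1) gives $y_i \in g_i H$ with $y_i \to y$. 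Since both $x_i$ and $y_i$ lie in the same coset $g_i H$, one has $x_i^{-1} y_i \in H$, and by closedness of $H$ the limit $x^{-1}y$ lies in $H$, i.e.\ $y \in xH$.
\end{itemize}

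I do not expect any serious obstacle: the proof is a direct application of the Chabauty--Fell convergence criteria combined with the group structure of $H$. The only conceptual subtlety worth highlighting is \emph{why} one must subtract $\{\emptyset\}$: when $G/H$ is non-compact, a sequence of cosets $g_iH$ can leave every compact subset of $G$, and the corresponding Chabauty--Fell limit is the empty set. Thus $\emptyset$ genuinely does arise as a boundary point of the orbit, and the statement of the lemma precisely asserts that this is the \emph{only} boundary point. The argument above makes no use of local compactness beyond ensuring that $\mathcal C(G)$ is metrizable (so that sequential convergence suffices), and it relies on $H$ being a \emph{closed subgroup} only through the two facts $x_iH = g_iH$ and $H = \overline{H}$.
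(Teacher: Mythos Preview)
Your proof is correct and follows essentially the same strategy as the paper: both pick a nonempty limit $F$, use (CF1) to find representatives $x_i \in g_iH$ converging to some $x$, and exploit $x_iH = g_iH$ to identify the limit. The only difference is in the final step: the paper invokes the joint continuity of the $G$-action on $\mathcal C(G)$ to conclude directly that $x_iH \to xH$, hence $F = xH$, whereas you verify the two inclusions $xH \subseteq F$ and $F \subseteq xH$ by hand via (CF1)/(CF2). Your version is slightly longer but more self-contained; the paper's is shorter because it reuses a fact already recorded in the preliminaries.
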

\begin{proof} Let $(g_n)$ be a sequence in $G$ such that $g_n H$ converges to some $H' \in \mathcal C(G)$ and assume $H' \neq \emptyset$. Then there exists $x \in H'$ and by (CF1) there exist $h_n \in H$ such that $g_nh_n \to x$. In particular there exists a compact set $K$ such that $k_n := g_nh_n \in K$ and $k_n H = g_n H \to H'$. Passing to a subsequence we may assume that $k_n$ converges to some $k \in K$, and by continuity of the $G$-action we deduce that $k_n H \to kH$. Thus $H' = kH \in G/H$.
\end{proof}
Note that if $P_1, \dots, P_n \in \mathcal C(G)$, then the projection onto the $i$th factor yields a continuous surjective $G$-factor map $\pi_i: \Omega_{P_1, \dots, P_n} \to \Omega_{P_i}$. If we set $P := P_1 \cup \dots \cup P_N$, then by Lemma \ref{Cor:continuity of the union map} we also have a continuous $G$-factor map $\pi: \Omega_{P_1, \dots, P_n} \to \Omega_P$ given by $(Q_1, \dots, Q_n) \mapsto Q_1 \cup \dots \cup Q_n$.

We apply these factor maps to study relatively dense subsets of lcsc groups. This is made possible by the observation that if $P \in \mathcal C(G)$, then $\emptyset \in \Omega_P$ if and only if $P$ is not relatively dense \cite[Prop. 4.4]{BH1}. We will use the fact that every TDS contains a \emph{minimal} subset, i.e.\ a subset which is the orbit closure of each of its elements. Note that if a minimal system $Z$ contains a fixpoint $p$, then $Z = \{p\}$. 
\begin{lemma}
\label{lemma_unformindex}
Let $P_1,\ldots,P_k \in \cC(G)$. If $P := P_1 \cup \ldots \cup P_n$ is relatively dense then there exists $i \in \{1,\ldots,n\}$
such that ${P_i^{-1}P_i}$ is relatively dense. 
\end{lemma}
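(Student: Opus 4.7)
The plan is to work inside the simultaneous hull $\Omega := \Omega_{P_1,\dots,P_n} \subset \mathcal C(G)^{\times n}$ and exploit two natural factor maps: the coordinate projections $\pi_i \colon \Omega \to \Omega_{P_i}$ and the union map $\pi \colon \Omega \to \Omega_P$, $(Q_1,\dots,Q_n)\mapsto Q_1\cup\dots\cup Q_n$, which is continuous and $G$-equivariant by Corollary~\ref{Cor:continuity of the union map}. The essential input from \cite[Prop.~4.4]{BH1}, recalled in the preceding discussion, is that a closed subset $S\subset G$ is relatively dense if and only if $\emptyset\notin\Omega_S$. Since every TDS contains a minimal closed $G$-invariant subset, I would fix one such $Z\subset \Omega$, so that both $\pi(Z)\subset\Omega_P$ and each $\pi_i(Z)\subset\Omega_{P_i}$ are minimal $G$-subsystems.

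The first key step is to locate an index $i\in\{1,\dots,n\}$ for which $\emptyset\notin\pi_i(Z)$. The empty set is a $G$-fixed point of $\mathcal C(G)$, so any minimal subsystem containing $\emptyset$ must coincide with $\{\emptyset\}$; were this the case for all $i$ simultaneously, every $(Q_1,\dots,Q_n)\in Z$ would satisfy $\pi((Q_1,\dots,Q_n))=\bigcup_i Q_i=\emptyset$, contradicting $\emptyset\notin\Omega_P$ (which holds by hypothesis via \cite[Prop.~4.4]{BH1}). Fixing such an $i$ and any $Q\in\pi_i(Z)$, the inclusion $\Omega_Q\subset\pi_i(Z)$ gives $\emptyset\notin\Omega_Q$, so $Q$ is itself relatively dense by the same criterion.

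The final step is to push this relative density back to $P_i^{-1}P_i$. From $Q=\lim g_m P_i$ and condition (CF1), any $q,q'\in Q$ can be written as limits $q=\lim g_m p_m$ and $q'=\lim g_m p'_m$ with $p_m,p'_m\in P_i$, so $q^{-1}q'=\lim p_m^{-1}p'_m\in\overline{P_i^{-1}P_i}$; this yields $Q^{-1}Q\subset\overline{P_i^{-1}P_i}$. On the other hand, fixing any $q_0\in Q$ (which exists since $Q$ is non-empty), one has $q_0^{-1}Q\subset Q^{-1}Q$ and $(q_0^{-1}Q)K=q_0^{-1}(QK)=G$ whenever $QK=G$, so $Q^{-1}Q$ is relatively dense. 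Combining these, $\overline{P_i^{-1}P_i}$ is relatively dense, and a routine thickening by a compact neighborhood $U$ of $e$ (using $\overline{S}\subset SU$) upgrades this to relative density of $P_i^{-1}P_i$ itself. The conceptual subtlety, which is what forces the use of the difference set, is that $P_i$ itself need not be relatively dense even when $P$ is (consider $\mathbb R=(-\infty,0]\cup[0,\infty)$); the minimality argument is precisely what lets us find an index $i$ for which an \emph{entire} orbit closure $\Omega_Q\subset\Omega_{P_i}$ avoids $\emptyset$, converting a pointwise non-emptiness condition into the global statement we need.
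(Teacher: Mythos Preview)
Your proof is correct and follows essentially the same approach as the paper: both pick a minimal subset $Z$ of the simultaneous hull, use the union factor map together with $\emptyset\notin\Omega_P$ to find an index $i$ with $\emptyset\notin\pi_i(Z)$, take a relatively dense $Q\in\pi_i(Z)$, invoke $Q^{-1}Q\subset\overline{P_i^{-1}P_i}$ (which the paper cites as \cite[Lemma 4.6]{BH1} while you prove it directly from (CF1)), and finish by thickening with a compact identity neighborhood. The only differences are organizational: you argue the existence of $i$ by contradiction on all coordinates at once, and you make the $q_0^{-1}Q\subset Q^{-1}Q$ step explicit where the paper leaves it implicit in the chain $G\subset Q_i^{-1}Q_iK_1$.
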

\begin{proof} We choose a minimal subset $Z$ of the joint hull $\Omega_{P_1, \dots, P_n}$ and an element $z \in Z$. We then set $Q := \pi(z)$ and $Q_i := \pi_i(z)$, where
$\pi_i: \Omega_{P_1, \dots, P_n} \to \Omega_{P_i}$ and $\pi: \Omega_{P_1, \dots, P_n} \to \Omega_P$ are the factor maps defined above. 

We first observe that $z \neq (\emptyset, \dots, \emptyset)$, since otherwise $Q = \emptyset$, contradicting the assumption that $P$ be relatively dense. There thus exists $i \in \{1, \dots, n\}$ such that $Q_i \neq \emptyset$. Note that $\emptyset$ is a $G$-fixpoint in $\mathcal C(G)$. Since $\Omega_{Q_i} = \pi_i(Z)$ is minimal and $Q_i \neq \emptyset$, we thus deduce that $\emptyset \not \in \Omega_{Q_i}$, hence $Q_i$ is relatively dense in $G$. Let $K_1 \subset G$ be a compact subset such that $G = Q_iK_1$.

Since $Q_i \in \Omega_{P_i}$, we deduce from \cite[Lemma 4.6]{BH1} that $Q_i^{-1}Q_i \subset \overline{P_i^{-1}P_i}$, and hence $Q_i^{-1}Q_i \subset P_i^{-1}P_iK_0$, where $K_0$ is any compact identity neighbourhood in $G$. We thus obtain
\[
G \subset Q_i^{-1}Q_iK_1 \subset P_i^{-1}P_iK_0K_1
\]
which shows that $P_i^{-1}P_i$ is relatively dense in $G$.
\end{proof}

\subsection{Non-uniform approximate lattices}\label{SecNonUniform}
We now turn to the definition of non-uniform approximate lattices. Let $G$ be a lcsc group and let $\Gamma< G$ be a discrete subgroup. By Lemma \ref{lemma2} we have $\Omega_\Gamma \setminus \{\emptyset\} = G/\Gamma$. Thus $\Gamma$ is a lattice in $G$ if and only if there exists a $G$-invariant probability measure on $\Omega_\Gamma \setminus \{\emptyset\}$. Equivalently, if $\mu$ is any admissible probability measure on $\mu$ (i.e. absolutely continuous with respect to Haar measure and with support generating $G$ as a semigroup), then there exists a $\mu$-stationary probability measure on $\Omega_\Gamma \setminus \{\emptyset\}$. 
\begin{definition}\label{DefSAL} Let $G$ be a lcsc group, let $P \in \mathcal C(G)$ and let $\Lambda \subset G$ be a closed and discrete approximate subgroup.
\begin{enumerate}[(i)]
\item A probability measure $\nu$ on $\Omega_P$ is called \emph{non-trivial} if $\nu(\{\emptyset\}) = 0$.
\item $\Lambda$ is called a \emph{strong approximate lattice} if there exists a non-trivial $G$-invariant probability measure $\nu$ on $\Omega_\Lambda$.
\item $\Lambda$ is called an \emph{approximate lattice} if for every admissible probability measure $\mu$ on $G$ there exists a non-trivial $\mu$-stationary probability measure $\nu$ on $\Omega_\Lambda$.
\end{enumerate}
\end{definition}
Concerning the relations between these definitions we remark: Every uniform approximate lattice and every strong approximate lattice is an approximate lattice. We do not currently know whether every approximate lattice is strong. If $G$ is amenable, then every uniform approximate lattice is strong, and if $G$ is nilpotent, then every approximate lattice is both strong and uniform (\cite[Thm. 4.25]{BH1}). A discrete subgroup $\Gamma < G$ is a strong approximate lattice iff it is an approximate lattice iff it is a lattice by the remark preceding Definition \ref{DefSAL}.

\subsection{Quasi-monotone joining of hulls}\label{SecJoinings}

Throughout this subsection let $G$ be a lcsc group and let $P, Q \in \mathcal C(G)$. We have surjective factor maps $\pi_1: \Omega_{P, Q} \to \Omega_P$ and $\pi_2: \Omega_{P,Q} \to \Omega_Q$. A triple $(\nu_{P, Q}, \nu_P, \nu_Q)$ of probability measures on $\Omega_{P, Q}$, $\Omega_P$ and $\Omega_Q$ respectively is called a \emph{hull joining} if $(\pi_1)_*\nu_{P, Q} = \nu_P$ and $(\pi_2)_*\nu_{P, Q} = \nu_Q$:
\[\begin{xy}\xymatrix{
&(\Omega_{P, Q}, \nu_{P,Q}) \ar[ld]_{\pi_1} \ar[rd]^{\pi_2}&\\
(\Omega_P, \nu_P)&&(\Omega_Q,\nu_Q)
}\end{xy}\]


The hull joining is called \emph{invariant} if $\nu_{P, Q}$ (and hence $\nu_P$ and $\nu_Q$) is $G$-invariant; it is called \emph{$\mu$-stationary} for an admissible probability measure $\mu$ on $G$ if $\nu_{P, Q}$ is $\mu$-stationary, i.e.\ $\mu \ast \nu_{P, Q} = \nu_{P,Q}$. An invariant or $\mu$-stationary hull joining is called \emph{ergodic} if $\nu_{P,Q}$ (and hence $\nu_P$ and $\nu_Q$) is $G$-ergodic.

If $\mu$ is an admissible probability measures on $G$, then by Lemma \ref{MeasureProjection} in Appendix \ref{AppProjection}, every (ergodic) $\mu$-stationary probability measure $\nu_P$ on $\Omega_P$ lifts to an (ergodic) $\mu$-stationary probability measure on $\Omega_{P, Q}$, hence is part of an (ergodic) $\mu$-stationary hull joining $(\nu_{P, Q}, \nu_P, \nu_Q)$. This construction works for arbitrary $P, Q \in \mathcal C(G)$, but there are two caveats:
\begin{itemize}
\item It may happen that $\nu_Q = \delta_\emptyset$, even if $\nu_P$ is non-trivial.
\item Even if $\nu_P$ is $G$-invariant, it will in general not be part of an \emph{invariant} joining (unless $G$ is amenable).
\end{itemize}
To deal with the first point, we need to add some assumption on the relation between $P$ and $Q$. For example, non-triviality of $\nu_Q$ is guaranteed in the case of a \emph{monotone} joining, i.e.\ if $P \subset Q$ and hence $P' \subset Q'$ for all $(P', Q') \in \Omega_{P,Q}$. We will need a slightly more general version of this result.
\begin{lemma}[Quasi-monotone joinings]\label{Joinings}
Let $P,Q \in \cC(G)$ and assume that $P \subset QF$ for some finite set $F \subset G$. Then $P' \subset Q'F$ for all $(P', Q') \in \Omega_{P,Q}$. In particular, if $Q' = \emptyset$, then $P' = \emptyset$.
\end{lemma}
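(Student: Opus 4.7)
The plan is to verify the inclusion $P' \subset Q'F$ for limit points $(P', Q') \in \Omega_{P,Q}$ by pulling back through a convergent sequence of translates $(g_n P, g_n Q)$, using the characterization of convergence in the Chabauty--Fell topology via conditions (CF1) and (CF2), together with the fact that $F$ is finite.

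More precisely, I would start by observing that the defining inclusion $P \subset QF$ is preserved under left translation by elements of $G$: for every $g \in G$, one has $gP \subset gQF$. Since $\Omega_{P,Q}$ is the closure of the set of pairs $(gP, gQ)$ in $\mathcal C(G)^{\times 2}$, it suffices to show that the relation ``$P' \subset Q'F$'' is closed in $\mathcal C(G)^{\times 2}$, or equivalently, passes to limits along convergent sequences.

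So pick $(P', Q') \in \Omega_{P,Q}$ and a sequence $g_n \in G$ with $(g_n P, g_n Q) \to (P', Q')$. Fix an arbitrary $x \in P'$. By condition (CF1) applied to $g_n P \to P'$, there exist $x_n \in g_n P$ with $x_n \to x$. Since $g_n P \subset g_n Q F$, we can write $x_n = y_n f_n$ with $y_n \in g_n Q$ and $f_n \in F$. Here comes the one crucial use of the hypothesis: because $F$ is \emph{finite}, the pigeonhole principle lets us pass to a subsequence along which $f_n = f$ is constant. Then $y_n = x_n f^{-1} \to x f^{-1}$, and condition (CF2) applied to $g_n Q \to Q'$ forces $x f^{-1} \in Q'$, so $x = (xf^{-1}) f \in Q' F$. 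This gives $P' \subset Q'F$, and the final assertion ($Q' = \emptyset \Rightarrow P' = \emptyset$) is immediate.

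There is no real obstacle here; the only subtlety is the appeal to finiteness of $F$ to extract a constant subsequence of $f_n$, which is what makes the argument work and is also the reason the result is phrased for finite $F$ (an infinite translating set would generally not survive the limiting process, since $xf^{-1}$ might fail to have a convergent subsequence). The rest is a direct unwinding of the definitions of (CF1) and (CF2) recalled earlier in the section.
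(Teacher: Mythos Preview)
Your proof is correct and follows essentially the same argument as the paper: pick a sequence $g_n$ with $(g_nP,g_nQ)\to(P',Q')$, use (CF1) to approximate a given $x\in P'$ by $x_n\in g_nP$, write $x_n=y_nf_n$ with $y_n\in g_nQ$ and $f_n\in F$, use finiteness of $F$ to pass to a subsequence with $f_n=f$ constant, and conclude via (CF2) that $xf^{-1}\in Q'$. The paper's proof is the same in structure and detail.
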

\begin{proof} If $(P', Q') \in \Omega_{P,Q}$, then there exist $g_n \in G$ such that $g_nP \to P'$ and $g_nQ \to Q'$. It thus follows from (CF1) that every $p \in P'$ is the limit of a sequence of the form $(g_np_n)$ with $p_n \in P$. Since $P \subset QF$ we can write $p_n = q_nf_n$ with $q_n \in Q$ and $f_n \in F$. Passing to a subsequence we may assume that $(f_n = f)$ is constant. It then follows from (CF2) that $g_np_nf^{-1}$ converges to an element $q'\in Q'$, and hence $p' = q'f \in Q'F$. 
\end{proof}
\begin{corollary}\label{NonTrivialJoining} Let $P,Q \in \cC(G)$ and let $(\nu_{P, Q}, \nu_P, \nu_Q)$ be a $\mu$-stationary hull joining. If $P \subset QF$ for some finite set $F \subset G$ and if $\nu_P$ is non-trivial, then also $\nu_Q$ is non-trivial.
\end{corollary}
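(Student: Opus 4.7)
The plan is to translate the set-theoretic statement of Lemma \ref{Joinings} into a measure-theoretic one via the two coordinate projections of the joining. The key observation is that the conclusion ``$Q' = \emptyset$ implies $P' = \emptyset$'' from Lemma \ref{Joinings} can be rephrased as the fibre inclusion
\[
\pi_2^{-1}(\{\emptyset\}) \;=\; \{(P',Q') \in \Omega_{P,Q} : Q' = \emptyset\} \;\subset\; \{(P',Q') \in \Omega_{P,Q} : P' = \emptyset\} \;=\; \pi_1^{-1}(\{\emptyset\}),
\]
since the hypothesis $P \subset QF$ applies to every pair in the simultaneous hull.

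Having established this inclusion, I would apply the joining property $(\pi_i)_*\nu_{P,Q} = \nu_P$ (respectively $\nu_Q$) and the monotonicity of measure to conclude
\[
\nu_Q(\{\emptyset\}) \;=\; \nu_{P,Q}\bigl(\pi_2^{-1}(\{\emptyset\})\bigr) \;\leq\; \nu_{P,Q}\bigl(\pi_1^{-1}(\{\emptyset\})\bigr) \;=\; \nu_P(\{\emptyset\}) \;=\; 0,
\]
where the final equality uses the assumption that $\nu_P$ is non-trivial. This directly gives non-triviality of $\nu_Q$.

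There is essentially no obstacle here: neither $\mu$-stationarity nor the specific structure of $F$ plays any role beyond what is already absorbed into Lemma \ref{Joinings}. The only point of minor care is that the singleton $\{\emptyset\}$ is measurable in $\mathcal C(G)$ (it is even closed, being the unique $G$-fixed point in the Chabauty--Fell topology that arises from the empty set), so that the preimages under the continuous maps $\pi_1$ and $\pi_2$ are genuine Borel sets to which the pushforward identities apply.
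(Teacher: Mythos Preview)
Your proof is correct and takes essentially the same approach as the paper: both arguments reduce immediately to the implication ``$Q' = \emptyset \Rightarrow P' = \emptyset$'' from Lemma~\ref{Joinings}, with the paper phrasing the conclusion as a contradiction via supports while you give the equivalent direct argument via the fibre inclusion $\pi_2^{-1}(\{\emptyset\}) \subset \pi_1^{-1}(\{\emptyset\})$ and monotonicity of $\nu_{P,Q}$. One minor remark: your parenthetical justification for $\{\emptyset\}$ being closed is slightly off (it is not the unique $G$-fixed point---$G$ itself is also fixed), but the conclusion holds simply because $\mathcal C(G)$ is metrizable and hence Hausdorff.
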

\begin{proof} Otherwise, ${\rm supp}(\nu_{P,Q}) \subset (\Omega_P \times \{\emptyset\}) \cap\Omega_{P,Q}$, and then Lemma \ref{Joinings} would impliy that $\nu_P$ is the Dirac mass at the empty set.
\end{proof}
To deal with the second point, we observe that if $(\nu_{P, Q}, \nu_P, \nu_Q)$ is a $\mu$-stationary hull joining and $\nu_P$ is non-trivial and $G$-invariant, then while $\nu_Q$ need not be invariant, it can at least not satisfy certain strong negations of invariance. To make this precise we recall the following definition.
\begin{definition} Let $H \curvearrowright \Omega$ be a TDS, $\mu$ an admissible probability measure on $H$ and let $\nu$ be a $\mu$-stationary Borel probability measure on $\Omega$. We denote by ${\bf P} := \mu^{\times \mathbb N}$ the product measure on $G^{\times \mathbb N}$.
\begin{enumerate}[(i)]
\item 
Given $\xi = (\xi_n) \in G^{\times \mathbb N}$ we say that $\nu$ has \emph{conditional measure} $\nu_\xi$ with respect to $\xi$ if
\[
(\xi_1 \cdots \xi_n)_*\nu \longrightarrow \nu_\xi
\]
in the weak-$*$-topology as $n \to \infty$.
\item $\nu$ is callled \emph{$\mu$-proximal} if for ${\bf P}$-almost every $\xi \in G^{\times \mathbb N}$ the conditional measure $\mu_\xi$ exists and is a point measure. In this case, $(\Omega, \nu)$ is called a \emph{$\mu$-boundary}.
\end{enumerate}
\end{definition}
In fact, it follows from the martingale convergence theorem that conditional measures exist for  ${\bf P}$-almost every $\xi \in G^{\times \mathbb N}$. Typical examples of $\mu$-boundaries are given by generalized flag varieties: If $k$ is a local field, ${\bf H}$ is a semisimple algebraic group over $k$, and ${\bf P}$ is a parabolic subgroup of ${\bf H}$, then for every admissible probability measure $\mu$ on ${\bf H}(k)$ there exists a unique $\mu$-stationary probability measure $\nu$ on $({\bf H/P})(k)$, and then $(({\bf H/P})(k), \nu)$ is a $\mu$-boundary \cite[Thm. VI.3.7]{Margulis}.
\begin{proposition}\label{NonSAT} Assume that $(\nu_{P, Q}, \nu_P, \nu_Q)$ is an ergodic $\mu$-stationary hull joining and that $P \subset QF$ for some finite set $F \subset G$. If $\nu_P$ is non-trivial and $G$-invariant and $\nu_Q$ is $\mu$-proximal, then there exists $Q' \in \Omega_Q$ such that $G = Q'F$.
\end{proposition}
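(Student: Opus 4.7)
The plan is to identify the conditional measures of $\nu_{P,Q}$ as products of the form $\nu_P \otimes \delta_{Q_\xi}$, apply Lemma \ref{Joinings} fiberwise, and then use the $G$-invariance of $\nu_P$ to upgrade a pointwise inclusion to $G = Q_\xi F$. Concretely, for $\mathbf{P}$-a.e.\ $\xi \in G^{\times \mathbb{N}}$, the conditional measure
\[
\lambda_\xi := \lim_{n \to \infty} (\xi_1 \cdots \xi_n)_*\nu_{P,Q}
\]
exists in the weak-$*$ topology on probability measures on $\Omega_{P,Q}$. Since $\pi_1$ and $\pi_2$ are continuous and pushforwards commute with weak-$*$ limits, the $G$-invariance of $\nu_P$ gives $(\xi_1 \cdots \xi_n)_*\nu_P = \nu_P$ for every $n$ and hence $(\pi_1)_*\lambda_\xi = \nu_P$, while $\mu$-proximality of $\nu_Q$ gives $(\pi_2)_*\lambda_\xi = \delta_{Q_\xi}$ for some $Q_\xi \in \Omega_Q$. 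A joint probability measure with a Dirac marginal is the product of its marginals, so, viewed as a measure on $\Omega_P \times \Omega_Q$, one has $\lambda_\xi = \nu_P \otimes \delta_{Q_\xi}$. Concentration of $\lambda_\xi$ on $\Omega_{P,Q}$ then forces $(P',Q_\xi) \in \Omega_{P,Q}$ for $\nu_P$-a.e.\ $P'$, and Lemma \ref{Joinings} applied to the hypothesis $P \subset QF$ yields $P' \subset Q_\xi F$ for $\nu_P$-a.e.\ $P'$.

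To upgrade this to $G \subset Q_\xi F$, apply the $G$-invariance of $\nu_P$ once more: for each $g \in G$, the translated set $\{P' : gP' \subset Q_\xi F\}$ still has full $\nu_P$-measure. Fixing a countable dense subset $G_0 \subset G$, intersecting these full-measure sets over $G_0$, and using non-triviality of $\nu_P$, we find some non-empty $P'$ satisfying $gP' \subset Q_\xi F$ for all $g \in G_0$. Pick any $p \in P'$; then $G_0 p \subset Q_\xi F$. Since $Q_\xi F$ is closed (it is the finite union of right-translates of the closed set $Q_\xi$) and $G_0 p$ is dense in $G$, we conclude $G \subset Q_\xi F$, so $G = Q_\xi F$, and we may take $Q' := Q_\xi$.

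The main obstacle is the conditional-measure step: correctly interchanging the weak-$*$ limit with the two pushforwards, and verifying that the product form of $\lambda_\xi$, together with the fact that its support lies in $\Omega_{P,Q}$, yields the fiberwise inclusion $(P', Q_\xi) \in \Omega_{P,Q}$ that feeds into Lemma \ref{Joinings}. Once this is in place, the remaining density argument exploiting $G$-invariance of $\nu_P$ is elementary.
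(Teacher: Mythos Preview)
Your proof is correct, and it takes a genuinely different route from the paper's. The paper invokes \cite[Prop.~3.1]{FurstenbergGlasner} as a black box: since $\nu_P$ is $G$-invariant and $\nu_Q$ is $\mu$-proximal, the only $\mu$-joining is the product $\nu_P\otimes\nu_Q$; hence $\mathrm{supp}(\nu_{P,Q})=\mathrm{supp}(\nu_P)\times\mathrm{supp}(\nu_Q)$ is $G\times G$-invariant, and one applies Lemma~\ref{Joinings} to pairs $(gP',Q')$ directly for every $g\in G$. You instead unfold the conditional-measure mechanism behind Furstenberg--Glasner by hand: each $\lambda_\xi$ has marginals $\nu_P$ and $\delta_{Q_\xi}$, so $\lambda_\xi=\nu_P\otimes\delta_{Q_\xi}$; its support lies in $\Omega_{P,Q}$, feeding Lemma~\ref{Joinings} fiberwise, and then you use $G$-invariance of $\nu_P$ together with a countable-density argument to pass from an almost-everywhere inclusion to $G=Q_\xi F$. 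Your approach is more self-contained (no external reference needed) and isolates exactly the piece of the Furstenberg--Glasner argument required here; the paper's approach is shorter once that reference is granted, and yields the slightly cleaner intermediate statement that the joining itself is the product measure. Both routes rest on the same two ingredients: the product structure forced by proximality against an invariant factor, and Lemma~\ref{Joinings}.
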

\begin{proof} Assume for contradiction that $\nu_Q$ was $\mu$-proximal. It would then follow from \cite[Prop. 3.1]{FurstenbergGlasner} that (in the terminology of \emph{loc.\ cit.}) the only $\mu$-joining of $(\Omega_P, \nu_P)$ and $(\Omega_Q, \nu_Q)$ is the product joining. In our terminology this means that $(\Omega_{P \times Q}, \nu_{P, Q}) \cong (\Omega_P \times \Omega_Q, \nu_P \times \nu_Q)$ as measurable $G$-spaces, hence in particular, the support of $\nu_{P, Q}$ would be $G \times G$ invariant. 

Now since $\nu_P$ is non-trivial, we can find $P \in {\rm supp}(\nu_P)$ such that $P' \neq \emptyset$. There then exists $Q' \in {\rm supp}(\nu_Q)$ such that $(P', Q') \in {\rm supp}(\nu_{P,Q})$ and hence $G\times G$-invariance of the latter set implies that $(gP', Q') \in {\rm supp}(\nu_{P,Q})$ for all $g \in G$. By Lemma \ref{Joinings} we thus have $gP' \subset Q'F$ for all $g \in G$ and thus $Q'F = G$.
\end{proof}
\subsection{Commensurability invariance of approximate lattices}
Given a group $G$ we say that two subsets $A, B \subset G$ are \emph{commensurable} if there exist finite subset $F_1, F_2 \subset G$ such that $A \subset BF_1$ and $B \subset AF_2$. Commensurability defines an equivalence relation on subsets of $G$, and as a first application of stationary hull joinings we show that the class of approximate lattices is invariant under commensurability.
\begin{lemma}\label{Commens} Let $\Lambda$ be an approximate lattice in a lcsc group $G$. If $\Lambda' \subset G$ is a discrete approximate subgroup and $\Lambda \subset \Lambda' F$ for some finite subset $F \subset G$,  then also $\Lambda'$ is an approximate lattice. In particular, this is the case if $\Lambda'$ is commensurable to $\Lambda$.
\end{lemma}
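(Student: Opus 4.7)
The plan is to obtain the required non-trivial stationary measures on $\Omega_{\Lambda'}$ by transporting them from $\Omega_\Lambda$ via a $\mu$-stationary hull joining, in exactly the quasi-monotone setup that has just been developed. Fix an admissible probability measure $\mu$ on $G$. Since $\Lambda$ is assumed to be an approximate lattice, Definition \ref{DefSAL} provides a non-trivial $\mu$-stationary probability measure $\nu_\Lambda$ on $\Omega_\Lambda$.

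Next, invoke the lifting statement (Lemma \ref{MeasureProjection} in Appendix \ref{AppProjection}) applied to the factor map $\pi_1 \colon \Omega_{\Lambda, \Lambda'} \to \Omega_\Lambda$: it yields a $\mu$-stationary probability measure $\nu_{\Lambda, \Lambda'}$ on the joint hull $\Omega_{\Lambda, \Lambda'}$ with $(\pi_1)_*\nu_{\Lambda, \Lambda'} = \nu_\Lambda$. Setting $\nu_{\Lambda'} := (\pi_2)_*\nu_{\Lambda, \Lambda'}$ produces a $\mu$-stationary hull joining $(\nu_{\Lambda, \Lambda'}, \nu_\Lambda, \nu_{\Lambda'})$ in the sense of Subsection \ref{SecJoinings}.

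Now apply Corollary \ref{NonTrivialJoining} with $P = \Lambda$ and $Q = \Lambda'$: the hypothesis $\Lambda \subset \Lambda' F$ is exactly the quasi-monotonicity condition needed, and non-triviality of $\nu_\Lambda$ forces non-triviality of $\nu_{\Lambda'}$. Since $\mu$ was an arbitrary admissible measure, and since $\Lambda'$ is a discrete approximate subgroup by assumption, this verifies Definition \ref{DefSAL}(iii) and shows that $\Lambda'$ is an approximate lattice. The concluding assertion is immediate: if $\Lambda'$ is commensurable to $\Lambda$, then by definition $\Lambda \subset \Lambda' F$ for some finite $F \subset G$, so the first part applies.

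The argument has no real obstacle, the content having been absorbed into Lemma \ref{Joinings} and Corollary \ref{NonTrivialJoining}; the only thing to verify is that commensurability is exactly the hypothesis under which the quasi-monotone joining machinery transfers non-triviality from $\Omega_\Lambda$ to $\Omega_{\Lambda'}$.
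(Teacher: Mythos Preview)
Your proof is correct and follows essentially the same route as the paper: obtain a non-trivial $\mu$-stationary measure on $\Omega_\Lambda$, lift it to a $\mu$-stationary hull joining via Lemma~\ref{MeasureProjection}, and then invoke Corollary~\ref{NonTrivialJoining} to conclude non-triviality of the projected measure on $\Omega_{\Lambda'}$. The paper's version is just a terser rendering of the same argument.
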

\begin{proof} 
Since $\Lambda$ is an approximate lattice, there exists a non-trivial $\mu$-stationary probability measure $\nu_\Lambda$ on $\Omega_\Lambda$ for every admissible probability measure $\mu$ on $G$, and  hence a stationary hull joining $(\mu_{\Lambda, \Lambda'}, \mu_\Lambda, \mu_{\Lambda'})$ between $\Omega_\Lambda$ and $\Omega_{\Lambda'}$. By Corollary \ref{NonTrivialJoining}, the measure $\nu_{\Lambda'}$ is non-trivial, hence $\Lambda'$ is an approximate lattice. 
\end{proof}
If $G$ is amenable, then invariant hull joining always exist, hence we deduce:
\begin{corollary} Let $\Lambda$ be a strong approximate lattice in an amenable lcsc group $G$. If $\Lambda' \subset G$ is a discrete approximate subgroup and $\Lambda \subset \Lambda' F$ for some finite subset $F \subset G$,  then $\Lambda'$ is a strong approximate lattice. In particular, this is the case if $\Lambda'$ is commensurable to $\Lambda$.\qed
\end{corollary}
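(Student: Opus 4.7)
The plan is to mimic the proof of Lemma \ref{Commens}, replacing $\mu$-stationary hull joinings by $G$-invariant ones, whose existence we will secure via amenability. Since $\Lambda$ is a strong approximate lattice, fix a non-trivial $G$-invariant probability measure $\nu_\Lambda$ on $\Omega_\Lambda$. The key step is to produce a $G$-invariant probability measure $\nu_{\Lambda,\Lambda'}$ on the simultaneous hull $\Omega_{\Lambda,\Lambda'}$ satisfying $(\pi_1)_*\nu_{\Lambda,\Lambda'} = \nu_\Lambda$; we can then set $\nu_{\Lambda'} := (\pi_2)_*\nu_{\Lambda,\Lambda'}$ and deduce the conclusion from the quasi-monotone joining machinery of Section \ref{SecJoinings}.

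To produce the invariant lift, consider the set $J$ of all Borel probability measures on the compact metrizable space $\Omega_{\Lambda,\Lambda'}$ whose pushforward under the continuous surjection $\pi_1\colon \Omega_{\Lambda,\Lambda'}\to\Omega_\Lambda$ equals $\nu_\Lambda$. The set $J$ is non-empty (any Borel section of $\pi_1$, or any disintegration, yields an element), convex, and weak-$*$ compact. Since $\nu_\Lambda$ is $G$-invariant, $J$ is stable under the $G$-action by pushforward, which acts affinely and continuously. Amenability of $G$ then furnishes a $G$-fixed point $\nu_{\Lambda,\Lambda'}\in J$, which is the desired invariant joining.

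With $\nu_{\Lambda'} := (\pi_2)_*\nu_{\Lambda,\Lambda'}$ in hand, $G$-invariance of $\nu_{\Lambda'}$ is automatic by construction. For non-triviality, note that any $G$-invariant probability measure is $\mu$-stationary for every admissible $\mu$ on $G$; thus, using the hypothesis $\Lambda \subset \Lambda' F$, Corollary \ref{NonTrivialJoining} applies to the invariant hull joining $(\nu_{\Lambda,\Lambda'},\nu_\Lambda,\nu_{\Lambda'})$ and gives $\nu_{\Lambda'}(\{\emptyset\}) = 0$. Hence $\Lambda'$ is a strong approximate lattice. The final assertion follows since commensurability directly yields the containment $\Lambda \subset \Lambda' F$ for some finite $F\subset G$.

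The only non-routine point is the amenable fixed-point argument producing the invariant joining; everything else reduces immediately to tools already established in Section \ref{SecJoinings}. No additional obstacle is expected.
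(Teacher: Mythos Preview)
Your proof is correct and follows exactly the approach intended by the paper, which simply records that amenability yields invariant hull joinings and then invokes Corollary~\ref{NonTrivialJoining}; you have merely spelled out the amenable fixed-point argument producing the invariant lift of $\nu_\Lambda$ to $\Omega_{\Lambda,\Lambda'}$.
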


\section{Algebraic approximate subgroups}\label{SecAlgebraicApproximateGroups}
In this section we are going to study Zariski closures of approximate subgroups of linear algebraic groups.
Throughout this section we fix a field $k$ and a linear algebraic group ${\bf G}$ defined over $k$ and denote by $G:={\bf G}(k)$ its group of $k$-points. We equip $G$ with its Zariski topology and given a subset $A \subset G$ we denote by $\overline{A}^Z$ its Zariski closure in $G$.

\subsection{Zariski closures of approximate subgroups}
We first observe that approximate subgroups are preserved under Zariski closure:
\begin{lemma}[Zariski closures of approximate subgroups]\label{ZariskiClosureAG} If $\Lambda \subset G$ is an approximate subgroup, then its Zariski closure $\overline{\Lambda}^Z \subset G$ is an approximate subgroup as well.
\end{lemma}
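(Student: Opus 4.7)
The plan is to verify the three defining properties of an approximate subgroup for $\overline{\Lambda}^Z$: symmetry, containment of the identity, and the doubling condition $(\overline{\Lambda}^Z)^2 \subset F' \overline{\Lambda}^Z$ for some finite $F' \subset G$. The first two are immediate: the identity lies in $\Lambda \subset \overline{\Lambda}^Z$, and the inversion map is an isomorphism of the $k$-variety $\mathbf{G}$, hence a Zariski homeomorphism, so $(\overline{\Lambda}^Z)^{-1} = \overline{\Lambda^{-1}}^Z = \overline{\Lambda}^Z$ using symmetry of $\Lambda$. The only substantial point is the doubling condition, and here the natural candidate is $F' := F_\Lambda$, where $F_\Lambda$ is as in \eqref{FLambda}.

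To establish $(\overline{\Lambda}^Z)^2 \subset F_\Lambda \overline{\Lambda}^Z$, I would proceed in three steps. First, reduce the problem to showing $(\overline{\Lambda}^Z)^2 \subset \overline{\Lambda^2}^Z$: given this, the inclusion $\Lambda^2 \subset F_\Lambda \Lambda$ combined with the facts that left multiplication by any element of $G$ is a Zariski automorphism of $\mathbf{G}$ and that Zariski closure commutes with finite unions yields $\overline{\Lambda^2}^Z \subset \overline{F_\Lambda \Lambda}^Z = F_\Lambda \cdot \overline{\Lambda}^Z$. Second, invoke Zariski continuity of the multiplication morphism $\mu \colon \mathbf{G} \times \mathbf{G} \to \mathbf{G}$: the preimage $\mu^{-1}(\overline{\Lambda^2}^Z)$ is Zariski closed in the product variety $\mathbf{G} \times \mathbf{G}$ and contains $\Lambda \times \Lambda$, so it contains the Zariski closure of $\Lambda \times \Lambda$ inside $(\mathbf{G}\times\mathbf{G})(k)$. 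Third, identify this latter closure with $\overline{\Lambda}^Z \times \overline{\Lambda}^Z$, so that applying $\mu$ yields $(\overline{\Lambda}^Z)^2 \subset \overline{\Lambda^2}^Z$.

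The principal technical point, which I view as the main obstacle, is the product formula $\overline{\Lambda \times \Lambda}^Z = \overline{\Lambda}^Z \times \overline{\Lambda}^Z$, understood with respect to the topology on $(\mathbf{G} \times \mathbf{G})(k)$ induced from the product $k$-variety (which is not the product topology!). The inclusion ``$\subset$'' is free since the right-hand side is Zariski closed. For ``$\supset$'', I would argue by slicing: for any fixed $b \in \Lambda$, the section $s_b \colon g \mapsto (g,b)$ is a closed immersion of $k$-varieties, so $s_b^{-1}(\overline{\Lambda \times \Lambda}^Z)$ is a Zariski closed subset of $\mathbf{G}(k)$ containing $\Lambda$, hence contains $\overline{\Lambda}^Z$. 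Taking the union over $b \in \Lambda$ gives $\overline{\Lambda}^Z \times \Lambda \subset \overline{\Lambda \times \Lambda}^Z$, and repeating the argument with a fixed element of $\overline{\Lambda}^Z$ in the first slot and the second coordinate as the variable yields $\overline{\Lambda}^Z \times \overline{\Lambda}^Z \subset \overline{\Lambda \times \Lambda}^Z$. Care must be taken that all closures are computed in the topology coming from $\mathbf{G}$ (resp.\ $\mathbf{G} \times \mathbf{G}$) as a $k$-variety, rather than in some product or extension thereof; once this is fixed, the argument is uniform in the ground field $k$.
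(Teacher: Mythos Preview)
Your proof is correct, but the paper takes a somewhat more elementary route. Rather than working in the product variety $\mathbf{G}\times\mathbf{G}$ and invoking joint continuity of multiplication together with the product-closure formula $\overline{\Lambda\times\Lambda}^Z=\overline{\Lambda}^Z\times\overline{\Lambda}^Z$, the paper observes that $G$ equipped with its Zariski topology is a \emph{semitopological} group (multiplication is separately continuous because left- and right-translations are Zariski homeomorphisms), and proves the result in that generality. Concretely, setting $H:=\overline{\Lambda}^Z$, one first fixes $\lambda\in\Lambda$ and uses that left-multiplication by $\lambda$ is a homeomorphism to get $\lambda H=\overline{\lambda\Lambda}\subset\overline{\Lambda^2}\subset HF_\Lambda$; then, since $HF_\Lambda$ is closed, one fixes $h\in H$ and uses that right-multiplication by $h$ is a homeomorphism to get $Hh=\overline{\Lambda h}\subset\overline{\Lambda H}\subset HF_\Lambda$, yielding $H^2\subset HF_\Lambda$. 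This two-step ``freeze one variable at a time'' argument is exactly your slicing proof of the product-closure formula, applied directly to the problem rather than routed through $\mathbf{G}\times\mathbf{G}$. The payoff of the paper's approach is a statement valid for arbitrary semitopological groups, avoiding altogether what you correctly flagged as the main technical obstacle.
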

Note that if we equip $G$ and $G\times G = ({\bf G \times G})(k)$ with their respective Zariski topologies then inversion and multiplication are continuous. Since the Zariski topology on $G\times G$ is finer than the product topology, $G$ is not a topological group with respect to the Zariski topology. It is however a (non-Hausdorff) \emph{semitopological} group in the sense that multiplication is separately continuous. Thus Lemma \ref{ZariskiClosureAG} is a special case of the following general result:
\begin{proposition} Let $G$ be a semitopological group (not necessarily Hausdorff). Then the closure of an approximate subgroup of $G$ is again an approximate subgroup.
\end{proposition}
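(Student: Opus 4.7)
I would verify directly that $\overline{\Lambda}$ inherits the three defining properties of an approximate subgroup: it contains the identity, is symmetric, and admits the covering relation $\overline{\Lambda}^2\subset F\overline{\Lambda}$ with the \emph{same} finite set $F$ that witnesses $\Lambda^2\subset F\Lambda$. Containment of $e$ is immediate. Symmetry reduces to continuity of inversion (automatic in the Zariski setting of Lemma \ref{ZariskiClosureAG}, where inversion is a morphism of varieties, and which I would read into the semitopological-group hypothesis here): from $\Lambda^{-1}=\Lambda$, continuity of $\iota$ gives $\iota^{-1}(\overline{\Lambda})\supset\Lambda$ and hence $\supset\overline{\Lambda}$, so $\iota(\overline{\Lambda})\subset\overline{\Lambda}$; applying $\iota$ once more reverses the inclusion.

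The real content is the covering relation, and the main obstacle is that multiplication is only \emph{separately} continuous, so the clean implication $\overline{A}\cdot\overline{B}\subset\overline{AB}$ that one has in a topological group is not directly at hand. The tool I would use is that separate continuity already forces each left translation $L_a$ and each right translation $R_a$ to be a homeomorphism, since its inverse $L_{a^{-1}}$, respectively $R_{a^{-1}}$, is again continuous.

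With this tool in place, I would argue in two passes. First, for each $x\in\Lambda$, continuity of $L_x$ together with $x\Lambda\subset\Lambda^2\subset\overline{\Lambda^2}$ makes the closed preimage $L_x^{-1}(\overline{\Lambda^2})$ contain $\Lambda$, hence contain $\overline{\Lambda}$; this yields $x\overline{\Lambda}\subset\overline{\Lambda^2}$, so $\Lambda\cdot\overline{\Lambda}\subset\overline{\Lambda^2}$. Second, for each $y\in\overline{\Lambda}$, the same closed-preimage argument applied to $R_y$ and to the already-established inclusion $\Lambda y\subset\overline{\Lambda^2}$ yields $\overline{\Lambda}\,y\subset\overline{\Lambda^2}$, and hence $\overline{\Lambda}^2\subset\overline{\Lambda^2}$. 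Finally, since each $L_f$ is a homeomorphism and closure commutes with finite unions, $\overline{F\Lambda}=\bigcup_{f\in F}\overline{f\Lambda}=\bigcup_{f\in F}f\overline{\Lambda}=F\overline{\Lambda}$; combining with $\overline{\Lambda^2}\subset\overline{F\Lambda}$ gives the desired $\overline{\Lambda}^2\subset F\overline{\Lambda}$.
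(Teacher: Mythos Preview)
Your proof is correct and follows essentially the same two-pass strategy as the paper: use that one-sided translations are homeomorphisms to push the closure through, first in one variable (fixing an element of $\Lambda$) and then in the other (fixing an element of $\overline{\Lambda}$). The paper goes directly to $\lambda\overline{\Lambda}=\overline{\lambda\Lambda}\subset\overline{\Lambda F_\Lambda}=\overline{\Lambda}F_\Lambda$ and then $\overline{\Lambda}h=\overline{\Lambda h}\subset\overline{\Lambda\,\overline{\Lambda}}\subset\overline{\Lambda}F_\Lambda$, while you factor through the intermediate inclusion $\overline{\Lambda}^2\subset\overline{\Lambda^2}$ before invoking $\overline{F\Lambda}=F\overline{\Lambda}$; this is a cosmetic difference in bookkeeping, not in substance.

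One remark: you are right to flag that symmetry of $\overline{\Lambda}$ needs continuity of inversion, which is not part of the bare semitopological hypothesis (separately continuous multiplication only). The paper's proof simply does not address symmetry explicitly; in the intended Zariski application inversion is a morphism and the point is moot. Your care here is appropriate, and your reading of the hypothesis is the right one for the statement to hold as phrased.
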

\begin{proof} Let $F_\Lambda$ be as in \eqref{FLambda} and let $H$ be the closure of $\Lambda$ in $G$. Enumerate $F_\Lambda = \{g_1, \dots, g_N\}$ and let $\lambda \in \Lambda$.
Then using the fact that left- and right-multiplication by an element of $G$ is a homeomorphism we obtain
\[
\lambda H = \lambda \overline{\Lambda} = \overline{\lambda\Lambda} \subset \overline{\Lambda^2} \subset \overline{\Lambda F_\Lambda} = \overline{\bigcup_{i=1}^N \Lambda g_i} \subset \bigcup_{i=1}^N \overline{\Lambda g_i} = \bigcup_{i=1}^N \overline{\Lambda}g_i = HF_\Lambda
\]
and hence $\Lambda H \subset HF_\Lambda$. Since the right hand side is closed as a finite union of closed set we deduce that for every $h \in H$,
\[
Hh = \overline{\Lambda} h = \overline{\Lambda h} \subset \overline{\Lambda H} \subset HF_\Lambda,
\]
which shows that $H^2 \subset HF_\Lambda$ and finishes the proof.
\end{proof}

\subsection{Algebraic approximate subgroups are almost subgroups}
The following is the main result of this section:
\begin{theorem}[Algebraic approximate subgroups are almost subgroups]\label{AASMain}
Let $k$ be a field and ${\bf G}$ be a linear algebraic group over $k$. If $\Lambda \subset {\bf G}(k)$ is an approximate subgroup, then there exists a $k$-algebraic subgroup ${\bf H}$ of ${\bf G}$, an element $g \in {\bf G}(k)$ and a finite subset $F \subset {\bf G}(k)$ such that \[g{\bf H}(k) \subset \overline{\Lambda}^Z \subset F {\bf H}(k) \cap {\bf H}(k)F.\] 
\end{theorem}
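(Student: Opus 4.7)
The plan is to use the approximate-subgroup condition $X^2 \subset FX$ on $X := \overline{\Lambda}^Z$ to identify a $k$-algebraic subgroup ${\bf H}$ close to $X$, built from the Zariski closure of $X_1 X_1^{-1}$ for a top-dimensional irreducible component $X_1$ of $X$. By Lemma \ref{ZariskiClosureAG}, $X$ is itself a Zariski-closed approximate subgroup of $G$, so we fix a finite $F \subset G$ with $X^2 \subset FX$. Decompose $X = X_1 \cup \ldots \cup X_m$ into its irreducible components (working over $\bar k$ if needed, and descending to $k$ at the end) and let $d := \dim X$; call $X_i$ top-dimensional if $\dim X_i = d$. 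A standard argument (using that an irreducible variety cannot be a finite union of proper closed subvarieties) shows that $\Lambda$ is Zariski dense in each $X_j$, so we may choose representatives $y_j \in \Lambda \cap X_j \subset G$.

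The central tool is the following dimension principle: for irreducible subsets $Y, Z \subset X$ with at least one of $Y, Z$ top-dimensional, the product $YZ$ is irreducible of dimension exactly $d$, with lower bound from a translate $yZ \subset YZ$ (for $y \in Y$) and upper bound from $YZ \subset X^2 \subset FX$. Since $FX = \bigcup_{f, k} f X_k$ is a finite union of irreducibles and $\overline{YZ}^Z$ is itself irreducible, it lies in a single piece $f X_k$; equality of dimensions then forces $\overline{YZ}^Z = f X_k$ with $X_k$ top-dimensional.

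Applying this principle stepwise yields the desired algebraic structure. Taking $Y = X_1$, $Z = X_1^{-1}$ gives that $L := \overline{X_1 X_1^{-1}}^Z$ is a closed, symmetric, irreducible subset of dimension $d$ containing $e$; taking $Y = Z = L$ gives $\overline{LL}^Z = L$, so $LL \subset L$ and hence $L$ is a subgroup, corresponding to the $k$-points of a $k$-algebraic subgroup ${\bf H}$ of ${\bf G}$ by standard descent. Taking $Y = L$, $Z = X_i$ (top-dim) gives $L X_i \subset X_i$ with equal dimensions, so $X_i = L y_i$ is a single right coset. Taking $Y = X_i$, $Z = X_j$ (top-dim), writing $X_i X_j = L (y_i L y_i^{-1}) y_i y_j$ shows this product has dimension $d$, forcing $y_i L y_i^{-1} = L$, so $y_i \in N_G(L)$ and $X_i = L y_i = y_i L$. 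Finally, taking $Y = X_0$ (a low-dimensional component) and $Z = X_i$ yields $\overline{X_0 X_i}^Z = y_0 X_i$ for any $y_0 \in X_0$, which upon right-multiplying by $y_i^{-1}$ gives $X_0 \subset y_0 L$.

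Collecting everything, with $F' := \{y_1, \ldots, y_m\}$ and $F := F' \cup (F')^{-1}$, we get $X \subset F' L = F' {\bf H}(k)$; by symmetry $X = X^{-1} \subset {\bf H}(k) (F')^{-1}$, hence $X \subset F {\bf H}(k) \cap {\bf H}(k) F$. The inclusion $g {\bf H}(k) \subset X$ follows from $y_1 L = X_1 \subset X$ by taking $g := y_1$. The main technical obstacle is the dimension bookkeeping when manipulating constructible set products $YZ$ and passing to their Zariski closures, together with the descent of the subgroup $L$ from $G$ to a $k$-algebraic subgroup ${\bf H}$ of ${\bf G}$; the latter is handled by running the arguments Galois-equivariantly and exploiting that, over an infinite field, the $k$-points of any $k$-subvariety that is closed under multiplication determine a subgroup scheme.
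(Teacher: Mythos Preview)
Your argument is correct and follows essentially the same route as the paper: reduce to the Zariski closure via Lemma~\ref{ZariskiClosureAG}, pick a top-dimensional irreducible component, and combine irreducibility of products (Lemma~\ref{ProductOfIrreducible}) with a dimension count against the finite cover $X^k \subset F^{k-1}X$ to extract the subgroup. The paper sets $H := \lambda_0^{-1}\Lambda_0$ directly and shows $H^{-1}H = H$ in one stroke, then uses a pigeonhole argument for the finite cover; you instead build $L := \overline{X_1 X_1^{-1}}^Z$ (a conjugate of the paper's $H$) and prove the sharper structural fact that each component lies in a single coset, with the top-dimensional representatives normalizing $L$. Both packagings rest on the same two ideas.

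Two small points. First, your ``dimension principle'' is stated for $Y,Z \subset X$ but is then invoked with $Y = L$, which in general lies only in $FX$, not in $X$; the fix is immediate (use $LL \subset \overline{X^4}^Z \subset F^3X$, etc.), but the principle should be phrased for $Y,Z$ with $YZ$ contained in some finite union of translates of components. Second, the detour through $\bar k$ and Galois descent is unnecessary and makes the ``$\Lambda$ is dense in each $X_j$'' step delicate (the $\bar k$-components need not individually have $k$-points). The paper avoids this by working throughout in the $k$-Zariski topology on $G = {\bf G}(k)$: the irreducible components are then automatically $k$-closed, and any Zariski-closed subgroup of $G$ is ${\bf H}(k)$ for the $k$-subgroup ${\bf H}$ cut out by its vanishing ideal, with no descent required.
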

For the proof we recall that a topological space is called \emph{irreducible} if it is non-empty and it is not the union of two proper closed subsets. We call a (not necessarily Zariski-closed) subset of $G:={\bf G}(k)$ irreducible, if it is irreducible with respect to the restriction of the Zariski topology from $G$. We will need the
following lemma, which will be proved in the next subsection.
\begin{lemma}\label{ProductOfIrreducible} Let $A, B \subset G$ be irreducible subsets. Then ${A^{-1}B}$ is irreducible as well.
\end{lemma}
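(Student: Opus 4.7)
The plan is to prove irreducibility of $A^{-1}B$ directly inside $G$, using only that left-/right-translations and inversion are Zariski-homeomorphisms of $G$. This avoids the subtle point that the Zariski topology on $G \times G = (\mathbf{G}\times\mathbf{G})(k)$ is strictly finer than the product topology, so the irreducibility of a ``product'' set $A^{-1} \times B$ (which would yield the lemma in one line, as the continuous image under the morphism $(a,b)\mapsto a^{-1}b$) is not automatic when $k$ is not algebraically closed.

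I would begin by assuming $A^{-1}B \subset V_1 \cup V_2$ for Zariski-closed subsets $V_1,V_2 \subset G$ and reducing to a statement about $A$. For each fixed $a \in A$ the map $\psi_a\colon G \to G$, $b\mapsto a^{-1}b$, is a Zariski-homeomorphism, so $a^{-1}B = \psi_a(B)$ is irreducible. The irreducibility dichotomy then forces $a^{-1}B \subset V_1$ or $a^{-1}B \subset V_2$. Setting
\[
A_i := \{a \in A \mid a^{-1}B \subset V_i\}, \qquad i = 1,2,
\]
we obtain $A = A_1 \cup A_2$.

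The next step is to verify that each $A_i$ is closed in $A$. For each fixed $b\in B$, the map $\phi_b\colon G \to G$, $a \mapsto a^{-1}b$, is Zariski-continuous (as the composition of inversion and right-translation by $b$), so $\phi_b^{-1}(V_i)$ is closed in $G$. Rewriting
\[
A_i = A \cap \bigcap_{b \in B} \phi_b^{-1}(V_i)
\]
exhibits $A_i$ as the intersection of $A$ with a family of closed subsets of $G$, hence closed in $A$. Irreducibility of $A$ now forces $A = A_i$ for some $i \in \{1,2\}$, i.e.\ $A^{-1}B \subset V_i$. Together with the obvious non-emptiness of $A^{-1}B$, this gives irreducibility of $A^{-1}B$.

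The main (conceptual) obstacle is to resist the temptation to deduce the lemma from irreducibility of a Zariski product $A^{-1} \times B \subset G \times G$; the slicing argument above sidesteps this subtlety at the price of one extra continuity check on the individual fibers $\phi_b^{-1}(V_i)$. No deeper input is required: the argument works in any semitopological group whose inversion is also continuous, which in particular covers $G = \mathbf{G}(k)$ for arbitrary $k$.
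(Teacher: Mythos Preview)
Your proof is correct. The slicing argument you use --- fix $a$, use irreducibility of $B$ to place $a^{-1}B$ in one $V_i$, then show the set of ``good'' $a$'s is closed --- is exactly the mechanism the paper employs as well. The difference is one of packaging: the paper first passes to the product $A \times B$ inside $G \times G = (\mathbf{G}\times\mathbf{G})(k)$, embeds into affine space via a faithful representation, and then proves (as Lemma~\ref{Lem:product of irred is irred}) that a product of irreducible subsets of $k^n$ and $k^m$ is irreducible in $k^{n+m}$; the proof of that lemma is the same slice-and-close argument, but with closedness of the analogue of your $A_i$ verified by specializing defining polynomials. You instead run the slicing directly in $G$ and verify closedness of $A_i$ via continuity of $a \mapsto a^{-1}b$, which is cleaner: it needs no embedding, no polynomial bookkeeping, and (as you note) works in any semitopological group with continuous inversion. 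The paper's route has the minor advantage of isolating a reusable statement about products in affine space, but for the purpose at hand your direct argument is both shorter and conceptually tighter.
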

\begin{proof}[Proof of Theorem \ref{AASMain}]  In view of Lemma \ref{ZariskiClosureAG} we can replace $\Lambda$ by its Zariski closure and thereby assume that $\Lambda$ is Zariski closed. Let $F_\Lambda = \{g_1, \dots, g_N\}$ be as in \eqref{FLambda} and let $\Lambda_0, \dots, \Lambda_m$ be the irreducible components of $\Lambda$, where $\Lambda_0$ is an irreducible component of maximal dimension. We claim that if $\lambda_0 \in \Lambda_0$, then $H := \lambda_0^{-1}\Lambda_0$ is a subgroup of ${\bf G}(k)$. Since $e \in H$ we have $H \subset H^{-1}H$, and it remains to show $H^{-1}H \subset H$.  We have
\[
H^{-1}H = \Lambda_0^{-1}\Lambda_0 \subset \Lambda^2 \subset \bigcup_{i=1}^N g_i \Lambda = \bigcup_{i=1}^N\bigcup_{j=0}^m g_i\Lambda_j.
\]
Now $H$ is irreducible, since $\Lambda_0$ is irreducible, and hence also ${H^{-1}H}$ is irreducible by Lemma \ref{ProductOfIrreducible}. There thus exists  $i \in \{1, \dots, n\}$ and an irreducible component $\Lambda_j$ of $\Lambda$ such that 
\begin{equation}
g_i\Lambda_j \supset H^{-1}H \supset H,
\end{equation}
where we have used that $e \in H^{-1}$. Since $g_i\Lambda_j$ is Zariski closed we also have $\overline{H^{-1}H}^Z \subset g_i\Lambda_j$. Since $\Lambda_0$ was an irreducible component of maximal dimension, we have
\[
\dim g_i\Lambda_j = \dim \Lambda_j \leq \dim \Lambda_0 = \dim H \leq \dim \overline{H^{-1}H}^Z \leq \dim g_i\Lambda_j.
\]
We deduce that $\dim H = \dim \overline{H^{-1}H}^Z$, and since $H \subset H^{-1}H$ and the latter is irreducible we have $H = H^{-1}H$. Thus $H$ is a Zariski closed subgroup of $G$, and hence 
$H = {\bf H}(k)$, where ${\bf H}$ is the algebraic subgroup of ${\bf G}$ defined by the vanishing ideal of $H$.

It remains only to show that finitely many left-cosets of $H$, or equivalently $\Lambda_0$, cover $\Lambda$, since then also finitely many right cosets of $H$ cover $\Lambda$ by symmetry of $H$ and $\Lambda$. Assume otherwise; then the set $\{\lambda \Lambda_0 \mid \lambda \in \Lambda\}$ would be infinite, hence we could find a sequence $(\lambda_n)$ in $\Lambda$ such that $\lambda_i \Lambda_0 \neq \lambda_j \Lambda_0$ for all $i \neq j$. We then have
\[
\bigsqcup_{n=1}^\infty \lambda_n\Lambda_0 \subset \Lambda^2 \subset \bigcup_{i=1}^N g_i \Lambda \subset \bigcup_{i=1}^N \bigcup_{j=0}^m g_i\Lambda_j
\]
By the pigeonhole principle, one of the irreducible sets $g_i\Lambda_j$ would thus be a disjoint union of infinitely many irreducible subsets of the form $\lambda_n\Lambda_0$. Since $\dim \lambda_n\Lambda_0 \geq \dim g_i\Lambda_j$, this is a contradiction.
\end{proof}

\subsection{Product sets of irreducible sets are irreducible}
This subsection is devoted to the proof of Lemma \ref{ProductOfIrreducible}. We keep the notation of the previous subsection. In particular, $k$ denotes a field, ${\bf G}$ a linear algebraic group defined over $k$ and $G:={\bf G}(k)$. 

To show Lemma \ref{ProductOfIrreducible} we first observe that the image of an irreducible topological space under a continuous map is irreducible, and that the map  $q: G \times G \to G$ given by $q(a,b) := a^{-1}b$ is continuous, if $G$ and $G\times G = ({\bf G} \times{\bf G})(k)$ are equipped with their respective Zariski topology. Since $A^{-1}B = q(A \times B)$ it thus suffices to show that if $A$ and $B$ are irreducible subsets of $G$, then $A\times B$ is irreducible in $G \times G$.  We can choose a representation $\rho: {\bf G} \to {\bf GL}_n$ defined over $k$ and thereby consider $G$ as a subset of $k^{n^2}$. It then suffices to establish the following (see \cite[Exercise~3.15 (a)]{Hartshorne}):
\begin{lemma}\label{Lem:product of irred is irred}
Let $ X $ be an irreducible subset of $ k^n $, and let $ Y $ be an irreducible subset of $ k^m $. Then $ X\times Y $ is irreducible in $ k^{n+m} $.
\end{lemma}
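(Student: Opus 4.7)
The plan is to prove the lemma by the standard slicing argument from algebraic geometry. Suppose for contradiction that $X \times Y$ is reducible, so we can write $X \times Y = Z_1 \cup Z_2$ where $Z_1, Z_2$ are (relatively) Zariski closed proper subsets of $X \times Y$. The idea is to use the irreducibility of $Y$ to decide, for each $x \in X$, whether the fiber $\{x\} \times Y$ is trapped in $Z_1$ or in $Z_2$, and then use the irreducibility of $X$ to get a contradiction.

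More precisely, first I would fix $x \in X$ and consider the inclusion $\iota_x \colon Y \hookrightarrow X \times Y$, $y \mapsto (x,y)$. This is continuous with respect to the Zariski topologies, so $\iota_x^{-1}(Z_1)$ and $\iota_x^{-1}(Z_2)$ are closed subsets of $Y$ whose union is $Y$. Since $Y$ is irreducible, one of them equals $Y$, i.e.\ $\{x\} \times Y \subset Z_1$ or $\{x\} \times Y \subset Z_2$. Define
\[
X_i := \{x \in X \mid \{x\} \times Y \subset Z_i\} \qquad (i = 1,2),
\]
so that $X = X_1 \cup X_2$.

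Next I would show that each $X_i$ is closed in $X$ (in the Zariski topology). For every fixed $y \in Y$, the map $\kappa_y \colon X \hookrightarrow X \times Y$, $x \mapsto (x,y)$, is continuous, so $\kappa_y^{-1}(Z_i) = \{x \in X \mid (x,y) \in Z_i\}$ is closed in $X$. But
\[
X_i = \bigcap_{y \in Y} \kappa_y^{-1}(Z_i),
\]
which is closed as an intersection of closed sets. Since $X$ is irreducible and $X = X_1 \cup X_2$, we conclude $X = X_1$ or $X = X_2$, which forces $X \times Y \subset Z_1$ or $X \times Y \subset Z_2$, contradicting the properness of $Z_1, Z_2$.

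The only subtle point is to verify that the two inclusion maps $\iota_x$ and $\kappa_y$ are continuous when $k^n$, $k^m$ and $k^{n+m}$ are endowed with their Zariski topologies; but this is immediate, since the pullback of a polynomial on $k^{n+m}$ along either inclusion is a polynomial on $k^n$ (respectively $k^m$), so preimages of Zariski closed sets are Zariski closed. This is the only place where one might worry, since on $k^{n+m}$ the Zariski topology is strictly finer than the product topology; fortunately the argument only uses continuity along the coordinate inclusions, not a product-topology description. This completes the proof.
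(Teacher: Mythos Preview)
Your proof is correct and follows essentially the same slicing argument as the paper: define $X_i = \{x \in X \mid \{x\}\times Y \subset Z_i\}$, use irreducibility of $Y$ to get $X = X_1 \cup X_2$, show each $X_i$ is closed, and conclude via irreducibility of $X$. The only cosmetic difference is that the paper phrases the closedness of $X_i$ in terms of the specialized polynomials $f_y(T_1,\dots,T_n) := f(T_1,\dots,T_n,y)$ for $f$ in the vanishing ideal of $Z_i$, whereas you express the same thing as the intersection $\bigcap_{y\in Y}\kappa_y^{-1}(Z_i)$; these are identical once unwound.
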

\begin{proof}
Let $ Z_1 $ and $ Z_2 $ be closed subsets of $ k^{n+m} $, with corresponding vanishing ideals $ I_{Z_1} $ and $ I_{Z_2} $ in $ k[T_1,\dots ,T_{n+m}] $. Assume that $ X\times Y $ is contained in $ Z_1\cup Z_2 $. We have to show that either $ X\times Y\subset Z_1 $ or $ X\times Y\subset Z_2 $. For $ i\in \lbrace 1,2\rbrace $ we denote $ X_i = \lbrace x\in X~\vert~\lbrace x\rbrace\times Y\subset Z_i \rbrace $.

We first claim that  $ X\subset X_1\cup X_2 $. Indeed, $ \lbrace x\rbrace\times Y$ is the image of $Y$ under the Zariski continuous map $ k^m\to k^{n+m}$ given by $ b\mapsto (x,b) $, hence
an irreducible subset of $ k^{n+m} $, and thus of $ Z_1\cup Z_2 $. We deduce that for all $ x\in X $ we have either $ \lbrace x\rbrace\times Y\subset Z_1 $ or $ \lbrace x\rbrace\times Y\subset Z_2 $, which proves the first claim.

Secondly, we claim that $X_1$ is closed in $k^n$. Given $ b\in k^m $ and $ f\in k[T_1,\dots ,T_{n+m}] $, we define $ f_b\in k[T_1,\dots ,T_{n}] $ by $f_b(T_1, \dots, T_n) :=  f(T_1,\dots ,T_n,b_1,\dots ,b_m) $. Now for every $ x\in X $ we have $ \lbrace x\rbrace\times Y\subset Z_1 $ if and only if $ f(x,y) = 0 $ for all $ f\in I_{Z_1} $ and all $ y\in Y $, and hence
\[
X_1 = \lbrace x\in X~\vert~f_y(x) = 0 \text{ for all } y\in Y \text{  and for all } f\in I_{Z_1}\rbrace
\]
is closed. This proves the second claim, and the same argument shows that $X_2$ is closed in $k^n$.

We have written $X = X_1 \cup X_2$ as the union of two proper closed subsets. Since $X$ is irreducible this implies that either $ X\subset X_1 $ or $ X\subset X_2 $. Consequently we have either $ X\times Y\subset Z_1 $ or $ X\times Y\subset Z_2 $, which finishes the proof.
\end{proof}
This finishes the proof of Lemma \ref{ProductOfIrreducible}.

\section{Proof of Borel density}\label{SecMain}
\subsection{General setting}\label{SecSetting}

Throughout this section $k$ denotes a local field,  ${\bf G}$ is a connected linear algebraic group defined over $k$ and $\Lambda$ denotes an approximate subgroup of
 $G := {\bf G}(k)$. By Theorem \ref{AASMain} there exists an algebraic subgroup ${\bf H}$ of ${\bf G}$, an element $g \in G$ and a finite subset $F \subset G$ such that 
\[
g{\bf H}(k) \subset \overline{\Lambda}^Z \subset F{\bf H}(k) \cap {\bf H}(k)F.
\]
We will abbreviate $H := {\bf H}(k)$ enumerate $F = \{g_1, \dots, g_N\}$ so that
\begin{equation}\label{LambdaH}
\Lambda \subset \overline{\Lambda}^Z \subset \bigcup_{j=1}^n g_jH \qand \Lambda \subset \overline{\Lambda}^Z \subset \bigcup_{j=1}^n Hg_j
\end{equation}
We then have to show that $H = G$. The argument for this will be different in the uniform case (where we use relative denseness of $\Lambda$) and in the strong case (where we use a joining argument between the hulls of $\Lambda$ and $H$).

 
\subsection{The uniform case}
We consider the general setting (and notation) of Subsection \ref{SecSetting}. In addition we are going to assume that $\Lambda \subset G$ is a \emph{uniform} approximate lattice. From this assumption and \eqref{LambdaH} one immediately deduces:
\begin{lemma}\label{Hcocompact} If $\Lambda$ is a uniform approximate lattice, then $H$ is cocompact in $G$.
\end{lemma}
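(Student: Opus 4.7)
The plan is essentially a one-line computation combining relative denseness of $\Lambda$ with the right-coset inclusion in \eqref{LambdaH}. Here is how I would lay it out.

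First I would record that $H = \mathbf{H}(k)$ is a closed subgroup of $G$ (being the group of $k$-points of an algebraic subgroup, in the Hausdorff topology), so that cocompactness means $G/H$ is compact, equivalently $G = HC$ for some compact $C \subset G$.

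Next I would invoke uniformity: by definition of a uniform approximate lattice, there is a compact set $K \subset G$ with $G = \Lambda K$. By the right-hand inclusion of \eqref{LambdaH},
\[
\Lambda \;\subset\; \bigcup_{j=1}^{N} H g_j,
\]
so
\[
G \;=\; \Lambda K \;\subset\; \bigcup_{j=1}^{N} H g_j K \;=\; H \cdot \Bigl(\bigcup_{j=1}^{N} g_j K\Bigr).
\]
Setting $C := \bigcup_{j=1}^{N} g_j K$, which is compact as a finite union of translates of a compact set, we obtain $G = HC$, proving that $H$ is cocompact in $G$.

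There is no real obstacle here; the statement is essentially a repackaging of the fact that the Zariski closure of $\Lambda$ is contained in finitely many right cosets of $H$ together with $\Lambda$'s relative denseness. The only thing to be slightly careful about is using the correct side (right-coset) inclusion from \eqref{LambdaH}, since $\Lambda K$ is a left-translate by $\Lambda$ of $K$, and one wants the $H$ to end up on the left of the compact factor.
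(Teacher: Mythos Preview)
Your proof is correct and is actually more direct than the paper's. The paper uses the \emph{left}-coset inclusion $\Lambda \subset g_1H \cup \dots \cup g_NH$ from \eqref{LambdaH} and then invokes Lemma~\ref{lemma_unformindex} (which relies on minimal subsets of the joint hull) to conclude that some $(g_jH)^{-1}(g_jH) = H$ is relatively dense. You instead use the \emph{right}-coset inclusion $\Lambda \subset Hg_1 \cup \dots \cup Hg_N$ and combine it directly with $G = \Lambda K$ to get $G = H\bigl(\bigcup_j g_jK\bigr)$, bypassing Lemma~\ref{lemma_unformindex} entirely. Your route is shorter and more elementary; the paper's route has the minor advantage of only using the left-coset half of \eqref{LambdaH}, which is the half used again in the proof of Lemma~\ref{HUAL} immediately afterwards, but this is not a substantive gain.
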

\begin{proof} Since $\Lambda$ is relatively dense, so is its superset $g_1H \cup \dots \cup g_nH$. It thus follows from Lemma \ref{lemma_unformindex} that there exists $j \in \{1, \dots, n\}$ such that $(g_jH)^{-1}g_jH = H^{-1}H = H$ is relatively dense, i.e.\ cocompact.
\end{proof}
There is also a more subtle consequence of \eqref{LambdaH} and the assumption that $\Lambda$ be uniform. Namely, let $\Lambda_j := \Lambda \cap g_jH$ so that $
\Lambda = \Lambda_1 \cup \dots \cup \Lambda_N$. Since $\Lambda$ is relatively dense in $G$, Lemma \ref{lemma_unformindex} implies that there exists $j \in \{1, \dots, N\}$ such that $\Delta_j := \Lambda_j^{-1}\Lambda_j$ is relatively dense in $G$. Note that
\[
\Delta_j = (\Lambda \cap g_jH)^{-1}(\Lambda \cap gH_j) \subset \Lambda^2 \cap H \subset \Lambda^2.
\]
In particular, $\Delta_j$ is a symmetric subset of the uniform approximate lattice $\Lambda^2$ which contains the identity and is relatively dense in $G$. It thus follows from \cite[Corollary 2.10]{BH1}, that $\Delta_j$ is a uniform approximate lattice in $G$ itself. Since $\Delta_j \subset H$ we deduce:
\begin{lemma}\label{HUAL} It $\Lambda$ is a uniform approximate lattice, then $H$ contains a uniform approximate lattice.
\end{lemma}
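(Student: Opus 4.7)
The plan is to exploit the decomposition \[\Lambda = \Lambda_1 \cup \dots \cup \Lambda_N, \quad \Lambda_j := \Lambda \cap g_j H\] coming from \eqref{LambdaH}, together with Lemma \ref{lemma_unformindex}. Since $\Lambda$ is relatively dense in $G$, that lemma produces an index $j$ such that $\Delta_j := \Lambda_j^{-1} \Lambda_j$ is relatively dense in $G$.

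The next observation is that $\Delta_j \subset H$, which is immediate from $(g_j H)^{-1}(g_j H) = H^{-1} g_j^{-1} g_j H = H$. At the same time $\Delta_j \subset \Lambda^2$, and of course $\Delta_j$ is symmetric and contains $e$. Thus $\Delta_j$ is a symmetric, identity-containing, relatively dense subset of the uniform approximate lattice $\Lambda^2$, and \cite[Cor.~2.10]{BH1} then gives that $\Delta_j$ is itself a uniform approximate lattice in $G$.

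Finally, I would transfer this to $H$. Since $H$ is closed in $G$, any compact set $K \subset G$ with $G = \Delta_j K$ yields $K \cap H$ compact in $H$; and for any $h \in H$, writing $h = \delta k$ with $\delta \in \Delta_j \subset H$ forces $k = \delta^{-1} h \in H$, so $k \in K \cap H$. Hence $H = \Delta_j (K \cap H)$, i.e.\ $\Delta_j$ is relatively dense in $H$. Discreteness of $\Delta_j$ in $H$ is inherited from discreteness in $G$, and the approximate subgroup property is inherited from being a uniform approximate lattice in $G$ (or directly from $\Delta_j \subset \Lambda^2$). This produces the required uniform approximate lattice inside $H$.

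The only real content beyond the discussion preceding the lemma is the last paragraph: checking that relative denseness in $G$ descends to relative denseness in $H$. This is routine because $H$ is closed and $\Delta_j \subset H$, but it is the step that actually realizes $\Delta_j$ as an object living in $H$ rather than merely contained in $H$ as a subset of $G$.
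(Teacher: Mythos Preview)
Your argument is correct and follows the same route as the paper: decompose $\Lambda$ via \eqref{LambdaH}, apply Lemma~\ref{lemma_unformindex} to obtain a relatively dense $\Delta_j = \Lambda_j^{-1}\Lambda_j \subset \Lambda^2 \cap H$, and then invoke \cite[Cor.~2.10]{BH1}. Your final paragraph, checking that relative denseness of $\Delta_j$ in $G$ descends to relative denseness in the closed subgroup $H$, makes explicit a step the paper leaves to the reader; it is indeed needed for the subsequent application of the unimodularity theorem to $H$.
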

In \cite[Thm. 5.8]{BH1} it was established that if a compactly generated lcsc group contains a uniform approximate lattice, then it is unimodular. In Theorem \ref{Unimodular} in 
Appendix \ref{AppUnimodular} we will show that this also holds without the assumption of compact generation. This then implies that $H$ is unimodular, in addition to being cocompact. We have established:
\begin{theorem}\label{UniformCase} Let $k$ be a local field and ${\bf G}$ be a connected affine algebraic group defined over $k$. Assume that ${\bf G}$ does not contain any proper algebraic subgroup ${\bf H}$ such that ${\bf H}(k)$ is unimodular and cocompact in ${\bf G}(k)$. Then every uniform approximate lattice in ${\bf G}(k)$ is Zariski dense. 
\end{theorem}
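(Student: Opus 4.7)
The plan is to assemble the three structural facts already in hand. Given a uniform approximate lattice $\Lambda\subset G = {\bf G}(k)$, Theorem \ref{AASMain} produces a $k$-algebraic subgroup ${\bf H}$ of ${\bf G}$, an element $g\in G$ and a finite set $F=\{g_1,\dots,g_N\}\subset G$ satisfying \eqref{LambdaH}; set $H := {\bf H}(k)$. To prove the theorem it suffices to show ${\bf H} = {\bf G}$, because the lower inclusion $g H\subset \overline{\Lambda}^Z$ in \eqref{LambdaH} then forces $\overline{\Lambda}^Z = G$.

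The strategy is to verify that $H$ satisfies both hypotheses ruled out for proper subgroups by assumption. For cocompactness, Lemma \ref{Hcocompact} applies directly: the finite union $\bigcup_j g_j H \supset \Lambda$ is relatively dense, so by Lemma \ref{lemma_unformindex} some $(g_jH)^{-1}(g_jH) = H$ is relatively dense, i.e.\ cocompact. For unimodularity, I would invoke Lemma \ref{HUAL} to produce a uniform approximate lattice inside $H$ (obtained as $\Delta_j = \Lambda_j^{-1}\Lambda_j$ for a suitable piece $\Lambda_j = \Lambda\cap g_jH$, which is shown to be a uniform approximate lattice in $G$ using \cite[Cor. 2.10]{BH1} and then sits inside $H$), and then apply Theorem \ref{Unimodular} from Appendix \ref{AppUnimodular} to the lcsc group $H$ to conclude that $H$ is unimodular.

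At this point ${\bf H}$ is a $k$-algebraic subgroup of ${\bf G}$ such that $H = {\bf H}(k)$ is both cocompact in $G$ and unimodular. The standing hypothesis of the theorem then forces ${\bf H} = {\bf G}$, and the inclusion $g{\bf H}(k)\subset \overline{\Lambda}^Z$ from \eqref{LambdaH} gives $\overline{\Lambda}^Z = G$, which is Zariski density of $\Lambda$.

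No step in this argument is itself an obstacle, since all the real work has been done earlier: Theorem \ref{AASMain} is the algebraic input, Lemmas \ref{Hcocompact} and \ref{HUAL} are the two immediate consequences of \eqref{LambdaH} together with the uniformity of $\Lambda$, and the unimodularity of groups containing a uniform approximate lattice is delegated to Appendix \ref{AppUnimodular}. The only thing to verify carefully when writing the proof is that the passage from ``$H$ contains a uniform approximate lattice of $G$'' to ``$H$ contains a uniform approximate lattice of $H$'' is legitimate, which follows because $\Delta_j \subset H$ is already uniformly discrete and relatively dense in $G$ (hence a fortiori relatively dense in the closed subgroup $H$) and is an approximate subgroup intrinsically.
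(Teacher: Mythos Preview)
Your proposal is correct and follows essentially the same route as the paper's own proof, which simply invokes Lemma \ref{Hcocompact}, Lemma \ref{HUAL} and Theorem \ref{Unimodular} in succession to force ${\bf H} = {\bf G}$. Your final paragraph even makes explicit a point the paper leaves implicit, namely that $\Delta_j$, being a uniform approximate lattice in $G$ contained in $H$, is automatically a uniform approximate lattice in the closed subgroup $H$ (since $G = \Delta_j K$ with $\Delta_j \subset H$ forces $H = \Delta_j(K \cap H)$), so that Theorem \ref{Unimodular} applies to $H$.
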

\begin{proof} In view of Lemma \ref{Hcocompact}, Lemma \ref{HUAL} and Theorem \ref{Unimodular}, the assumption forces $H=G$, hence $\Lambda$ is Zariski dense. 
\end{proof}
\begin{corollary}[Borel density, uniform case]  Let $k$ be a local field, ${\bf G}$ be a connected semisimple algebraic group and assume that $G := {\bf G}(k)$ has no compact factors. Then every uniform approximate lattice in $G$ is Zariski dense.
\end{corollary}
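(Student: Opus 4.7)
The plan is to derive this corollary directly from Theorem~\ref{UniformCase}. Concretely, I need to verify that under the given hypothesis---${\bf G}$ connected semisimple over $k$, with $G := {\bf G}(k)$ having no compact factors---the group ${\bf G}$ admits no proper $k$-algebraic subgroup ${\bf H}$ such that $H := {\bf H}(k)$ is simultaneously cocompact in $G$ and unimodular.

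Assume toward a contradiction that such ${\bf H}$ exists. First, I would enlarge ${\bf H}$ to a maximal proper $k$-algebraic subgroup ${\bf M}$ of ${\bf G}$ (possible by a Noetherian dimension argument on the lattice of $k$-algebraic subgroups); the group $M := {\bf M}(k) \supseteq H$ remains relatively dense in $G$, and hence cocompact. I would then invoke Stuck's classification of maximal $k$-algebraic subgroups of connected semisimple $k$-groups over local fields \cite{St}: under the no-compact-factor assumption, ${\bf M}$ is forced to be a proper parabolic $k$-subgroup, which I will denote ${\bf P}$, with $P := {\bf P}(k)$.

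Next, since $H \subseteq P \subseteq G$ and $G/H$ is compact, the closed subspace $P/H \subseteq G/H$ is compact, so $H$ is cocompact in $P$. The classical relation $\Delta_P|_H = \Delta_H$ for closed cocompact subgroups (valid in the presence of an invariant measure on the quotient), combined with the unimodularity of $H$, yields $\Delta_P|_H \equiv 1$; in particular the closed subgroup $\ker \Delta_P \supseteq H$ is itself cocompact in $P$, so $P/\ker \Delta_P$ is compact, and hence so is its continuous image $\Delta_P(P) \subseteq \mathbb{R}_{>0}$ under the induced continuous homomorphism.

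The contradiction will come from the non-triviality of $\Delta_P$. Indeed, $\Delta_P = |\chi|_k$ for the $k$-rational character $\chi\colon {\bf P}\to \mathbb{G}_m$ encoding the adjoint action on $\bigwedge^{\mathrm{top}}\operatorname{Lie}(R_u{\bf P})$; the no-compact-factor hypothesis ensures that $\chi$ is non-trivial on a maximal $k$-split torus of ${\bf P}$, so $\Delta_P(P) \subseteq \mathbb{R}_{>0}$ contains arbitrarily large elements. Its closure is therefore non-compact, contradicting the compactness of $\Delta_P(P)$ established above. The main obstacle is the invocation of Stuck's classification and its careful application under the precise hypotheses at hand, ensuring that no other flavour of maximal $k$-algebraic subgroup can arise under the no-compact-factor assumption. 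Once this is in place, the modular-function computation proceeds cleanly.
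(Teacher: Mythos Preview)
Your high-level strategy matches the paper's: both reduce to the hypothesis of Theorem~\ref{UniformCase}, and the paper then simply cites \cite[Corollary~2.3]{St} as a black box. Your attempt to verify that hypothesis by hand, however, contains a genuine gap.

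The problematic step is the claim $\Delta_P|_H = \Delta_H$. As your own parenthetical concedes, this relation is \emph{equivalent} to the existence of a $P$-invariant Radon measure on $P/H$; cocompactness alone does not imply it. For instance, the Borel subgroup $B < \SL_2(\bR)$ is cocompact, yet $\Delta_{\SL_2}|_B \equiv 1 \neq \Delta_B$, and $\SL_2(\bR)/B$ carries no invariant measure. In your situation the relation does happen to hold, but establishing it requires first producing a $P$-invariant measure on $P/H$---for example by disintegrating the $G$-invariant probability measure on $G/H$ (available because both $G$ and $H$ are unimodular and $G/H$ is compact) along the projection $G/H \to G/P$. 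You have not supplied this step.

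More to the point, once that $G$-invariant probability measure on $G/H$ is in hand, pushing it forward to $G/P$ already yields a $G$-invariant probability measure on a proper flag variety of a group without compact factors, which is the desired contradiction; the entire modular-function detour through $\Delta_P$ is then unnecessary. A smaller issue: Stuck's dichotomy (his Lemma~2.2) does \emph{not} say that maximal $k$-subgroups are parabolic under the no-compact-factor assumption alone---the alternative, where $M^0$ is reductive with $k$-anisotropic center, can still occur. It is the \emph{cocompactness} of $M$, combined with \cite[Corollary~2.3]{St} (which gives infinite covolume in that branch), that excludes this second case.
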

\begin{proof} It only remains to check that the assumptions of Theorem \ref{UniformCase} are satisfied in this case. This follows from \cite[Corollary 2.3]{St}.
\end{proof}

\subsection{The non-uniform case}
We consider the general setting (and notation) of Subsection \ref{SecSetting}, but assume in addition that $\Lambda \subset G$ is a \emph{strong} approximate lattice. We also fix a left-Haar measure $m_G$ and an admissible probability measure $\mu = \rho m_G$ on $G$. We assume that $\mu$ is symmetric and that $\rho \in L^1(G) \cap L^\infty(G)$. From the results of Subsection \ref{SecJoinings} we then deduce:
\begin{lemma}\label{Existsnu}
If $H \neq G$, then there exists a $\mu$-stationary ergodic probability measure $\nu$ on $G/H$ which is not $\mu$-proximal.
\end{lemma}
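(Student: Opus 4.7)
The plan is to combine Corollary \ref{NonTrivialJoining} (which transfers non-triviality of stationary measures from $\Omega_\Lambda$ to $\Omega_H$) with Proposition \ref{NonSAT} (which rules out $\mu$-proximality), exploiting the inclusion $\Lambda \subset HF$ coming from \eqref{LambdaH}.

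Since $\Lambda$ is a strong approximate lattice, I fix a non-trivial $G$-invariant probability measure on $\Omega_\Lambda$, and by ergodic decomposition I may assume that it is $G$-ergodic; call it $\nu_\Lambda$. Admissibility of $\mu$ then implies that $\nu_\Lambda$ is $\mu$-ergodic, and hence extremal among $\mu$-stationary probability measures on $\Omega_\Lambda$. Applying Lemma \ref{MeasureProjection} to the factor $\pi_1\colon \Omega_{\Lambda, H}\to \Omega_\Lambda$, I lift $\nu_\Lambda$ to a $\mu$-stationary probability measure on $\Omega_{\Lambda, H}$ and pass to an ergodic component of its $\mu$-stationary ergodic decomposition. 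The first projection of such a component is a $\mu$-stationary ergodic measure whose integral against the decomposition recovers $\nu_\Lambda$; extremality of $\nu_\Lambda$ then forces this projection to equal $\nu_\Lambda$ itself. I thereby obtain an ergodic $\mu$-stationary hull joining $(\nu_{\Lambda, H}, \nu_\Lambda, \nu_H)$ whose first projection is non-trivial and $G$-invariant. Because $\Lambda \subset HF$, Corollary \ref{NonTrivialJoining} yields that $\nu_H$ is non-trivial; since $\{\emptyset\}$ is $G$-fixed with $\nu_H(\{\emptyset\}) = 0$, Lemma \ref{lemma2} allows me to view $\nu_H$ as a $\mu$-stationary ergodic probability measure on $G/H$.

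It remains to rule out $\mu$-proximality. Suppose for contradiction that $\nu_H$ is $\mu$-proximal. Then Proposition \ref{NonSAT} applied to $(P,Q) = (\Lambda, H)$ produces $H' \in \Omega_H$ with $G = H'F$; necessarily $H' \neq \emptyset$, so Lemma \ref{lemma2} gives $H' = g_0 H$ for some $g_0 \in G$, whence $G = HF = \bigcup_{j=1}^N H g_j$. Passing to Zariski closures and using that ${\bf G}(k)$ is Zariski dense in the connected algebraic group ${\bf G}$, this yields ${\bf G} \subset \bigcup_{j=1}^N {\bf H} g_j$; irreducibility of ${\bf G}$ together with a dimension count then forces ${\bf H} = {\bf G}$, hence $H = G$, contradicting $H \neq G$. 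The main subtle point is the extraction of an ergodic $\mu$-stationary joining whose first projection is still the $G$-invariant measure $\nu_\Lambda$; this depends precisely on admissibility of $\mu$ forcing $G$-ergodic measures to be $\mu$-ergodic, and hence extremal in the simplex of $\mu$-stationary probability measures.
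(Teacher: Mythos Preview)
Your proof is correct and follows essentially the same approach as the paper: construct an ergodic $\mu$-stationary hull joining over the $G$-invariant measure $\nu_\Lambda$, push it to $\Omega_H$, use Corollary \ref{NonTrivialJoining} and Lemma \ref{lemma2} to get a measure on $G/H$, and rule out proximality via Proposition \ref{NonSAT}. Your treatment of the ergodic lift is more explicit than the paper's (which simply invokes Lemma \ref{MeasureProjection}), and for the final contradiction you pass to Zariski closures in ${\bf G}$ and use irreducibility, whereas the paper argues directly that $\dim {\bf H} < \dim {\bf G}$ forces $[G:H]=\infty$; both routes are valid in the present context.
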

\begin{proof} Since $\Lambda$ is a strong approximate lattice, we can choose a non-trivial $G$-invariant ergodic probability measure on $\Omega_\Lambda$. This measure then induces via hull joining a $\mu$-stationary ergodic probability measure $\nu$ on $\Omega_H$. Since $\Lambda \subset HF$ we deduce from Lemma \ref{NonTrivialJoining} that $\nu$ is non-trivial, hence it is supported on the orbit $G/H \subset \Omega_H$ by Lemma \ref{lemma2}. To see that $\nu$ is not $\mu$-proximal it suffices to show by Lemma \ref{NonSAT} that if $H' \in \Omega_H$, then $H'F \subsetneq G$. Since ${\bf G}$ is connected and ${\bf H}$ is a proper algebraic subgroup of ${\bf G}$ we have $\dim{\bf H} < \dim{\bf G}$, and hence $H$ has infinite index in $G$. Since $\Omega_H \subset G/H \cup\{\emptyset\}$, the lemma follows.
\end{proof}
We thus have to investigate, which homogeneous spaces of the form $G/H$ admit non-proximal stationary probability measures. Note that since $G$ contains a strong approximate lattice, it is automatically unimodular by \cite[Thm. 5.8]{BH1}.
\begin{lemma}
\label{lemma3}
Assume that $G/H$ admits a $\mu$-stationary probability measure $\nu$. Then $\nu$ is actually the unique $\mu$-stationary probability measure on $G/H$, and if $H$ is unimodular, then $\nu$ is $G$-invariant, and hence $H$ has finite covolume.
\end{lemma}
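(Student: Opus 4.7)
My plan is to translate the stationarity equation into a fixed-point equation for a density and then exploit the symmetry and admissibility of $\mu$ to force this density to be essentially constant. I will first prove the $G$-invariance assertion (from which uniqueness in the unimodular case follows immediately) and then sketch how uniqueness is obtained in general.

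Since $G$ is unimodular (by the remark preceding the lemma and \cite[Thm.~5.8]{BH1}) and $H$ is assumed unimodular, the homogeneous space $G/H$ carries a non-zero $G$-invariant Radon measure $\lambda$. Because $\mu = \rho\, m_G$ with $\rho \in L^1(G) \cap L^\infty(G)$, the measure $\mu \ast \nu = \nu$ is absolutely continuous with respect to $\lambda$, and I may write $\nu = h\,\lambda$ for a non-negative $h \in L^1(\lambda)$. The stationarity equation becomes
\[
h(x) \;=\; \int_G h(g^{-1}x)\, d\mu(g) \;=:\; P_\mu h(x) \qquad \text{for $\lambda$-a.e.\ } x.
\]
A short computation using $\rho \in L^\infty$ then gives $h \in L^\infty(\lambda)$, so $h$ lies in $L^2(\lambda)$ on subsets of finite $\lambda$-measure.

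The crucial step is to apply Jensen's inequality to the identity $h = P_\mu h$. Using $G$-invariance of $\lambda$ together with symmetry of $\mu$, one computes $\|P_\mu h\|_{L^2(\lambda)} \leq \|h\|_{L^2(\lambda)}$; since $P_\mu h = h$, equality must hold, and the equality case of Jensen forces $h(g^{-1}x) = h(x)$ for $(\mu \times \lambda)$-almost every $(g,x)$. By Fubini, the set $E \subset G$ of those $g$ for which this identity holds for $\lambda$-a.e.\ $x$ has full $\mu$-measure, and a short verification shows that $E$ is a subgroup of $G$. Since $\mu$ is absolutely continuous and $\supp(\rho)$ generates $G$ as a group (admissibility combined with symmetry), the closure of $E$ must be $G$, so $h$ is $G$-invariant. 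Because $G$ acts transitively on $G/H$, this forces $h$ to be $\lambda$-a.e.\ constant, say $h \equiv c > 0$. The condition $\nu(G/H) = 1$ then gives $\lambda(G/H) = 1/c < \infty$, proving simultaneously that $\nu$ is $G$-invariant and that $H$ has finite covolume.

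The uniqueness statement in the general case (without a priori assuming $H$ unimodular) will follow by running the same scheme with a $G$-quasi-invariant Radon measure $\lambda$ on $G/H$ in place of an invariant one. The stationarity equation then becomes a weighted identity involving the Radon-Nikodym cocycle of $\lambda$, but symmetry of $\mu$ still renders the associated Markov operator self-adjoint on $L^2(\nu)$, and the same equality-case analysis pins down the density of any $\mu$-stationary probability measure relative to $\lambda$ up to a scalar, hence uniquely after normalization. The main obstacle I anticipate is precisely the equality-case analysis of Jensen's inequality and the promotion of $\mu$-almost-everywhere invariance of $h$ to genuine $G$-invariance; this requires careful bookkeeping of measurability issues and uses in an essential way that $\supp(\rho)$ has positive Haar measure in every neighbourhood of its own points and generates $G$ as a group.
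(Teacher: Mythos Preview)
Your argument for $G$-invariance in the unimodular case is essentially the paper's: write $\nu = u\,\eta$ with $\eta$ the $G$-invariant Radon measure on $G/H$, observe $u \in L^1(\eta) \cap L^\infty(\eta) \subset L^2(\eta)$, and use an $L^2$ identity to force $u$ to be $G$-invariant and hence constant. The paper carries out the last step by directly expanding $\int_{G/H}\int_G |u(gx)-u(x)|^2\,d\mu(g)\,d\eta(x)$ and checking it vanishes; this computation \emph{is} the equality case of your Jensen inequality, so the two arguments coincide. (Your hedge that $h$ lies in $L^2$ only ``on subsets of finite $\lambda$-measure'' is unnecessary: $\int h^2 \leq \|h\|_\infty \|h\|_1 < \infty$ globally, and the paper uses this.) The paper phrases the conclusion as a contradiction (assume infinite covolume, get $u$ constant hence not integrable), while you argue directly, but this is cosmetic.

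Where the proofs genuinely differ is uniqueness in the general case. The paper dispatches it in two lines with no analysis: any $\mu$-stationary probability measure on $G/H$ is $G$-quasi-invariant, and $G/H$ carries a unique $G$-quasi-invariant measure class; two distinct stationary probability measures would then admit two mutually singular ergodic components, a contradiction. Your sketch, by contrast, proposes to run the density argument against a quasi-invariant $\lambda$ and invoke self-adjointness of $P_\mu$ on $L^2(\nu)$. This step is problematic: for a merely $\mu$-stationary (rather than $\mu$-reversible) $\nu$, symmetry of $\mu$ does \emph{not} make $P_\mu$ self-adjoint on $L^2(\nu)$ --- the missing ingredient is precisely $G$-invariance of $\nu$, which is what you are trying to prove and which can fail when $H$ is not unimodular (e.g.\ $G/P$ for $P$ parabolic). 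So the obstacle is not the bookkeeping you anticipate but the self-adjointness claim itself; the paper's measure-class argument avoids this entirely.
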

\begin{proof} Every $\mu$-stationary probability measure on $G/H$ is $G$-quasi-invariant. If there was more than one $\mu$-stationary $\mu$-probability measure on $G/H$, then there would be two different ergodic such measures, and these would then be mutually singular. This would contradict the fact that the quotient $G/H$ admits a unique $G$-invariant measure class.

Now assume that $H$ is unimodular. We are going to show that $H$ has finite covolume. This will finish the proof, since then $\nu$ must be the unique invariant probability measure by the uniqueness statement. Assume for contradiction that $H$ has infinite covolume and denote by $\eta$ the infinite $G$-invariant Radon measure on $G/H$. Since $\nu$ and $\eta$ both represent the unique $G$-invariant measure class on $G/H$ we can write $\nu = u \eta$ for some non-negative $\eta$-integrable Borel function $u$ on $G/H$. Since $\nu$ is $\mu$-stationary and  $\mu = \rho m_G$ is symmetric we deduce that
\[\rho * u  =  u.\] 
Moreover, since $\rho \in L^\infty(G)$ we have, by H\"older's inequality,
\[
\|u\|_\infty = \|\rho * u\|_\infty \leq \|\rho\|_\infty \|u\|_1 < \infty.
\]
In particular, $u \in L^1(\eta) \cap L^\infty(\eta) \subset L^2(\eta)$ and $u$ is continuous. Since $\mu * u  = \check \mu \ast u = u$  and $\eta$ is  $G$-invariant we have 
\begin{eqnarray*}
&&\int_{G/H} \int_G \big| u(gx) - u(x)\big|^2 \, d\mu(g) \, d\eta(x) \\&=& \int_{G/H} \int_G u(gx)^2  d\mu(g) \, d\eta(x) -2 \int_{G/H} \int_G u(gx)d\mu(g)\,u(x) \, d\eta(x) + \int_{G/H} u(x)^2 d\eta(x)\\
&=& \int_G  \int_{G/H}u(gx)^2  d\eta(x)\,  d\mu(g) - 2  \int_{G/H} (\rho \ast u)(x) u(x)d\eta(x) + \int_{G/H} u(x)^2 d\eta(x)\\
&=& \int_G  \int_{G/H}u(x)^2  d\eta(x)\,  d\mu(g) - 2  \int_{G/H} u(x) u(x)d\eta(x) + \int_{G/H} u(x)^2 d\eta(x) \quad = \quad 0.
\end{eqnarray*}
We conclude that $u$ (as an element in $L^2$) is invariant under $\mu$-a.e. $g \in G$. Since the support of $\mu$ generates $G$, $u$ is $G$-invariant and thus
constant, whence not $\eta$-integrable. This contradicts the finiteness of $\nu$.
\end{proof}
\begin{corollary}\label{AbstractBorelDensity} Let $k$ be a local field, ${\bf G}$ be a connected algebraic group over $k$ and assume that every Zariski closed proper subgroup of ${\bf G}(k)$ is contained in a closed subgroup $M<G$ satisfying one of the following three properties:
\begin{enumerate}[(i)]
\item $M$ is unimodular of infinite covolume in $G$.
\item $G/M$ admits a unique $\mu$-stationary measure which is $\mu$-proximal.
\item $G/M$ does not admit a $\mu$-stationary probability measure.
\end{enumerate}
Then every strong approximate lattice in ${\bf G}(k)$ is Zariski dense.
\end{corollary}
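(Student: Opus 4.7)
My plan is to argue by contradiction: assuming a strong approximate lattice $\Lambda \subset G$ is not Zariski dense, I will apply Theorem~\ref{AASMain} to obtain an algebraic subgroup ${\bf H}$ of ${\bf G}$, an element $g \in G$ and a finite set $F \subset G$ with
\[
g{\bf H}(k) \subset \overline{\Lambda}^Z \subset F{\bf H}(k) \cap {\bf H}(k) F.
\]
Setting $H := {\bf H}(k)$, non-Zariski-density forces $H \neq G$ (otherwise $\overline{\Lambda}^Z$ would contain $G$), so $H$ is a proper Zariski closed subgroup and $\Lambda \subset HF$. By the hypothesis of the corollary, $H$ sits inside a closed subgroup $M < G$ satisfying one of (i)--(iii), and since $H \subset M$ we also have $\Lambda \subset MF$.

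The core step is to transport the invariant measure on $\Omega_\Lambda$ to a non-trivial $\mu$-stationary measure on $\Omega_M$ via a hull joining, then derive a contradiction from the properties of $M$. Since $\Lambda$ is a strong approximate lattice, $\Omega_\Lambda$ carries a non-trivial $G$-invariant probability measure; I will replace it by an ergodic component (preserving non-triviality and $G$-invariance) to obtain an ergodic, non-trivial, $G$-invariant (hence $\mu$-stationary) measure $\nu_\Lambda$. By Lemma~\ref{MeasureProjection}, $\nu_\Lambda$ lifts to an ergodic $\mu$-stationary probability measure $\nu_{\Lambda, M}$ on $\Omega_{\Lambda, M}$, whose projection to $\Omega_M$ I denote by $\nu_M$. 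Since $\Lambda \subset MF$, Corollary~\ref{NonTrivialJoining} will ensure that $\nu_M$ is non-trivial, and Lemma~\ref{lemma2} then supports $\nu_M$ on $G/M \subset \Omega_M$.

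Next I will rule out each possibility for $M$. Case (iii) is immediate: the existence of $\nu_M$ contradicts the hypothesis. In case (i), $M$ is unimodular, so Lemma~\ref{lemma3} upgrades $\nu_M$ to a $G$-invariant probability measure on $G/M$ and forces $M$ to have finite covolume, contradicting~(i). The main obstacle I anticipate is case (ii): uniqueness of the $\mu$-stationary measure on $G/M$ will force $\nu_M$ to coincide with the given $\mu$-proximal measure, so Proposition~\ref{NonSAT} applied to $(\nu_{\Lambda, M}, \nu_\Lambda, \nu_M)$ with $P = \Lambda$ and $Q = M$ will yield some $M' \in \Omega_M$ with $G = M'F$. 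Lemma~\ref{lemma2} gives $M' = g_0 M$ for some $g_0 \in G$, so $G = MF$ and $M$ has finite index in $G$. Since $M$ is proper we have $|G/M| \geq 2$, hence the $G$-invariant uniform probability measure on $G/M$ is another $\mu$-stationary measure and by uniqueness must equal $\nu_M$. But a $G$-invariant measure has constant conditional measures $\nu_\xi \equiv \nu_M$, which by $\mu$-proximality must be point masses; this forces $|G/M| = 1$, providing the desired contradiction.
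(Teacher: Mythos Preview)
Your proof is correct and follows the same overall strategy as the paper: argue by contradiction, invoke Theorem~\ref{AASMain} to obtain the algebraic subgroup $H$, transport the invariant measure on $\Omega_\Lambda$ to a stationary measure on the homogeneous space via a hull joining, and then eliminate each of the cases (i)--(iii) using Lemma~\ref{lemma3} and Proposition~\ref{NonSAT}.

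There is one organizational difference worth noting. The paper first packages the joining argument with $Q = H$ into Lemma~\ref{Existsnu}, producing a non-proximal stationary measure $\nu$ on $G/H$, and only afterwards pushes forward along $G/H \to G/M$; it then asserts in one line that the push-forward $\nu_1$ is again non-proximal, which disposes of (ii) and (iii) simultaneously. You instead form the hull joining directly with $Q = M$, which keeps Proposition~\ref{NonSAT} applicable on the nose (the quasi-monotone inclusion $\Lambda \subset MF$ is inherited from $\Lambda \subset HF$). Your treatment of case~(ii) is correspondingly more explicit: rather than claiming non-proximality is inherited by factors, you let Proposition~\ref{NonSAT} run and observe that proximality of $\nu_M$ would force $[G:M] < \infty$, whereupon the uniform invariant measure on $G/M$ coincides with $\nu_M$ by uniqueness, and an invariant $\mu$-proximal measure must be a Dirac mass, contradicting $|G/M| \geq 2$. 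This last step is a clean self-contained argument that avoids any appeal to how proximality behaves under equivariant quotients.
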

\begin{proof} In that notation of Subsection \ref{SecSetting} we have to show that $H = G$. Assume otherwise, and let $M < G$ be a subgroup containing $H$ as in the corollary.
By Lemma \ref{Existsnu} there exists a $\mu$-stationary measure $\nu$ on $G/H$, and we denote by $\nu_1$ its push-forward to $G/M$. Since $\nu$ is $\mu$-stationary, but not $\mu$-proximal the same holds for $\nu_1$, and hence (ii) and (iii) cannot hold. This forces $M$ to be unimodular of infinite covolume, which contradicts Lemma \ref{lemma3}.
\end{proof}
We conclude:
\begin{theorem}[Borel density for strong approximate lattices]\label{StrongCase} Let $k$ be a local field and let ${\bf G}$ be a connected semisimple algebraic group over $k$. If $G := {\bf G}(k)$ does not have any compact factors, then every strong approximate lattice $\Lambda \subset G$ is Zariski dense.
\end{theorem}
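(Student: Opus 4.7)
The plan is to verify the hypothesis of Corollary~\ref{AbstractBorelDensity} for the semisimple group $\mathbf{G}$. Given the setup of Subsection~\ref{SecSetting}, I need to show that every proper Zariski closed subgroup $H = \mathbf{H}(k)$ of $G$ is contained in a closed subgroup $M \neq G$ satisfying one of the three alternatives (i), (ii), (iii) in that corollary; the conclusion then forces $H = G$ and $\Lambda$ is Zariski dense.

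First I would enlarge $\mathbf{H}$ to a maximal proper $k$-algebraic subgroup $\mathbf{M} \subsetneq \mathbf{G}$, which exists by Noetherianity of the Zariski topology. Then I would invoke Stuck's classification \cite{St} of maximal proper closed algebraic subgroups of a connected semisimple $k$-group without compact factors, extracting the following dichotomy: either $\mathbf{M}$ is contained in a proper parabolic $k$-subgroup $\mathbf{P}$ of $\mathbf{G}$, or $\mathbf{M}(k)$ is unimodular. In the parabolic case, $G/\mathbf{P}(k)$ is a generalized flag variety, hence by \cite[Thm.~VI.3.7]{Margulis} (as recalled in Section~\ref{SecPrelim}) carries a unique $\mu$-stationary probability measure, which is moreover $\mu$-proximal, so $M := \mathbf{P}(k)$ fulfils (ii). In the unimodular case, the uniform Borel density theorem just established (whose local input was precisely \cite[Corollary~2.3]{St}) prohibits proper unimodular cocompact closed subgroups of $G$, so $M := \mathbf{M}(k)$ must have infinite covolume, verifying (i).

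The main obstacle is pinning down the ``parabolic versus unimodular'' dichotomy cleanly for all local fields, including positive characteristic. To handle this I would argue that a maximal proper $k$-algebraic $\mathbf{M} \subsetneq \mathbf{G}$ either has non-trivial unipotent radical, in which case maximality together with the fact that the normaliser of a unipotent subgroup of $\mathbf{G}$ is contained in a proper parabolic forces $\mathbf{M}$ into a parabolic, or else $\mathbf{M}$ is reductive, and unimodularity is then automatic. Cocompactness of the reductive option is excluded by the uniform case, as noted. Once this dichotomy is secured the verification of Corollary~\ref{AbstractBorelDensity} is immediate and the theorem follows.
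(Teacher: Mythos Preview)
Your proposal is correct and follows essentially the same route as the paper's proof: verify the hypotheses of Corollary~\ref{AbstractBorelDensity} via Stuck's dichotomy (parabolic versus reductive identity component, \cite[Lemma~2.2]{St}), settle the parabolic case by the proximality result \cite[Thm.~VI.3.7]{Margulis}, and settle the unimodular case by \cite[Corollary~2.3]{St}. One small correction: in the unimodular branch you need \emph{infinite covolume}, not merely non-cocompactness, so invoke \cite[Corollary~2.3]{St} directly for that conclusion rather than routing through the uniform theorem, which only excludes cocompact subgroups.
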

\begin{proof} We have to check that the conditions of Corollary \ref{AbstractBorelDensity} are satisfied. By \cite[Lemma 2.2]{St} every Zariski closed subgroup of $G$ is contained in either a parabolic subgroub $P$ of $G$ or in an algebraic subgroup $M$ whose identity component $M^0$ is reductive with anisotropic center over $k$. In the first case, $G/P$ admits a unique $\mu$-stationary measure which is $\mu$-proximal \cite[Thm. VI.3.7]{Margulis}. In the second case, $M$ is unimodular, since $M^0$ is reductive and $[M:M^0]$ is finite. In this case, it then has infinite covolume in $G$ by \cite[Corollary 2.3]{St}. Now the theorem follows from Corollary \ref{AbstractBorelDensity}.
\end{proof}

\section{Variants and refinements}\label{SecVariants}

\subsection{Dani--Shalom density}\label{SecDS}
Let ${\bf G}$ be a connected affine algebraic group defined over a local field $k$. In this subsection we will assume that $G = {\bf G}(k)$ is amenable. In particular, this is the case if ${\bf G}$ is solvable. 
\begin{theorem}[Dani--Shalom density for strong approximate lattices]\label{BDS} Assume that $G$ does not contain any proper normal cocompact algebraic subgroup. Then every strong approximate lattice and every uniform approximate lattice in $G$ is Zariski dense.
\end{theorem}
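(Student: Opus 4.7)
The plan is to apply Theorem \ref{AASMain} to reduce to an algebraic subgroup $\mathbf{H}\le \mathbf{G}$, establish cocompactness of $H:=\mathbf{H}(k)$, and then use the hypothesis on $\mathbf{G}$ to force $\mathbf{H}=\mathbf{G}$. Theorem \ref{AASMain} provides $\mathbf{H}\le\mathbf{G}$, $g\in G$ and a finite $F\subset G$ with $g\mathbf{H}(k)\subset\overline{\Lambda}^Z\subset F\mathbf{H}(k)\cap \mathbf{H}(k)F$. Assuming $\mathbf{H}$ is proper, the goal is to produce a proper normal cocompact algebraic subgroup of $\mathbf{G}$, contradicting the hypothesis.

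The next step is to show that $H$ is cocompact in $G$. In the uniform case this is Lemma \ref{Hcocompact}. In the strong case I would use amenability of $G$ to upgrade the stationary hull joinings of Subsection \ref{SecJoinings} to \emph{invariant} ones (as asserted in the remark preceding the corollary to Lemma \ref{Commens}, for instance by averaging a stationary lift over a F\o{}lner sequence). Starting from the non-trivial $G$-invariant measure on $\Omega_\Lambda$ provided by the strong approximate lattice hypothesis, Corollary \ref{NonTrivialJoining} applied to the quasi-monotone inclusion $\Lambda\subset HF$ yields a non-trivial $G$-invariant measure $\nu_H$ on $\Omega_H$, which by Lemma \ref{lemma2} is supported on $G/H$. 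Hence $H$ has finite covolume in $G$. Since $H$ is unimodular (by Theorem \ref{Unimodular}, applied to an approximate lattice sitting inside $H$ obtained as in the proof of Lemma \ref{HUAL}) and $G$ is amenable, a standard structural argument promotes finite covolume to cocompactness.

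The step I expect to be the main obstacle is to upgrade \emph{cocompact algebraic} to \emph{normal cocompact algebraic}. The natural first attempt is to replace $\mathbf{H}$ by its normal closure $\mathbf{N}$ inside the algebraic group $\mathbf{G}$: this is algebraic, normal in $\mathbf{G}$, and cocompact since it contains $\mathbf{H}$. If $\mathbf{N}\neq \mathbf{G}$ we are immediately done. In the remaining case $\mathbf{N}=\mathbf{G}$, the containment $\mathbf{N}\subset \mathbf{H}\cdot [\mathbf{G},\mathbf{G}]$ (valid in the connected solvable group $\mathbf{G}$, since each conjugate $g\mathbf{H}g^{-1}$ lies in $\mathbf{H}\cdot [\mathbf{G},\mathbf{H}]\subset \mathbf{H}\cdot [\mathbf{G},\mathbf{G}]$) forces $\mathbf{H}\cdot [\mathbf{G},\mathbf{G}]=\mathbf{G}$, so $\mathbf{H}$ surjects onto the abelianization. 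The isomorphism $\mathbf{G}/\mathbf{H}\cong [\mathbf{G},\mathbf{G}]/(\mathbf{H}\cap [\mathbf{G},\mathbf{G}])$ then shows that $\mathbf{H}\cap [\mathbf{G},\mathbf{G}]$ is cocompact in $[\mathbf{G},\mathbf{G}]$, and one iterates the construction along the derived series of $\mathbf{G}$, using at each stage that $\mathbf{G}^{(i)}$ is characteristic in $\mathbf{G}$ and that $\mathbf{H}\cdot \mathbf{G}^{(i)}=\mathbf{G}$ to arrange that the candidate normal subgroup produced remains normal in $\mathbf{G}$ and cocompact there. Since the derived series of the solvable group $\mathbf{G}$ terminates in finitely many steps, the iteration halts and yields either a proper normal cocompact algebraic subgroup of $\mathbf{G}$ (contradicting the hypothesis) or $\mathbf{H}=\mathbf{G}$, giving the required Zariski density of $\Lambda$.
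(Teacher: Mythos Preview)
Your approach diverges from the paper's and carries genuine gaps. The paper's proof is much shorter and more direct: after obtaining $H$ via Theorem~\ref{AASMain} and using amenability of $G$ to build a $G$-\emph{invariant} hull joining, one gets a non-trivial $G$-invariant probability measure $\nu_H$ supported on $G/H$ (by Corollary~\ref{NonTrivialJoining} and Lemma~\ref{lemma2}), hence with full support there. The hypothesis that $G$ has no proper normal cocompact algebraic subgroup is then fed directly into \cite[Thm.~1.1]{Shalom} (generalizing \cite[Cor.~2.6]{Dani}): since $G$ acts algebraically on $\mathbf{G}/\mathbf{H}$, the support of any $G$-invariant probability measure consists of $G$-fixpoints, forcing $H=G$. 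No cocompactness of $H$, no unimodularity of $H$, and no derived-series iteration are needed; the ``no proper normal cocompact algebraic subgroup'' hypothesis is precisely the discompactness input to Shalom's fixed-point theorem, not something to be manufactured by hand.

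Your route has specific problems. First, in the strong (non-uniform) case you cannot invoke the construction of Lemma~\ref{HUAL} to place a uniform approximate lattice inside $H$: that construction goes through Lemma~\ref{lemma_unformindex}, which requires relative denseness of $\Lambda$, unavailable here. Second, the assertion that ``a standard structural argument promotes finite covolume to cocompactness'' is not justified; finite covolume of a closed subgroup does not imply cocompactness in general amenable groups, and no argument is supplied. Third, even granting cocompactness of $H$, the derived-series iteration does not deliver what you need: at stage $i$ you obtain $H_i=H\cap \mathbf{G}^{(i)}$ cocompact in $\mathbf{G}^{(i)}$, but a normal closure of $H_i$ taken inside $\mathbf{G}^{(i)}$ need not be normal in $\mathbf{G}$, and a subgroup cocompact in $\mathbf{G}^{(i)}$ need not be cocompact in $\mathbf{G}$ (since you have no control over $G/G^{(i)}$). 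The sentence ``using\dots that $\mathbf{H}\cdot\mathbf{G}^{(i)}=\mathbf{G}$ to arrange that the candidate normal subgroup produced remains normal in $\mathbf{G}$ and cocompact there'' is exactly where the argument is missing. Finally, the iteration presupposes $\mathbf{G}$ solvable, whereas the theorem (and the paper's proof) only assumes $G$ amenable.
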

\begin{proof} Since $G$ is amenable, every uniform approximate lattice in $G$ is strong, hence we assume that $\Lambda \subset G$ is a strong approximate lattice. We then define $H$ as in Subsection \ref{SecSetting}. There then exists a finite set $F$ such that $\Lambda \subset HF$. Since $\Lambda$ is a strong approximate lattice, there exists a $G$-invariant ergodic probability measure $\nu_\Lambda$ on $\Omega_\Lambda$. Since $G$ is amenable, this measure is part of a $G$-invariant ergodic hull joining $(\nu_{\Lambda, H}, \nu_\Lambda, \nu_H)$ between $\Omega_\Lambda$ and $\Omega_H$, and the invariant probability measure $\nu_H$ is non-trivial by Lemma \ref{NonTrivialJoining}, hence supported on the orbit $G/H \subset \Omega_H$ by Lemma \ref{lemma2}. In particular, $\nu_H$ has full support on $G/H$.

Now ${\rm supp}(\nu_H) = G/H$ is a subset of $({\bf G/H})(k)$, and since ${\bf G}$ acts algebraically on ${\bf G/H}$, it follows from \cite[Thm. 1.1]{Shalom} (which generalizes \cite[Cor. 2.6]{Dani}) that this support consists entirely of $G$-fixpoints. This forces $H = G$ and finishes the proof.
\end{proof}
\begin{remark}\label{RemkSplit} Solvable algebraic groups over local fields $k$ whose $k$ points do not admit any proper normal algebraic cocompact subgroups are called \emph{$k$-discompact} and have been characterized by Shalom in \cite[Thm. 3.6]{Shalom}. They are precisely the $k$-algebraic solvable group which are \emph{$k$-split} in the sense that every composition factor is isomorphic to either the additive or multiplicative group over $k$.
\end{remark}
Taken together, Theorem \ref{UniformCase}, Theorem \ref{StrongCase}, Theorem \ref{BDS} and Remark \ref{RemkSplit} now yield our Main Theorem from the introduction.

\subsection{The case of unipotent groups}\label{SecUnipotent}

Theorem \ref{BDS} applies in particular to unipotent algebraic groups. Over local fields $k$ of positive characteristic there do exist non-split unipotent groups.
\begin{example}[Rosenlicht]
Let $p$ be a prime, $k:= \mathbb F_p((t))$ and consider the algebraic subgroup of the additive group of $k^2$ whose $k$-points are given by\[
\{(x,y) \in k^2 \mid y^p = tx^p-x\}.
\]
One checks that its group of $k$-points is actually infinite and contained in $(F_p[[t]])^2$, hence it is an example of a \emph{compact} unipotent group. Every finite subset of this group is thus a uniform approximate lattice, which is not Zariski dense.
\end{example}
The natural context of this example is that of $k$-wound unipotent groups. A unipotent algebraic group ${\bf G}$ defined over a field $k$ is called \emph{$k$-wound} if every $k$-morphism from the additive group of $k$ to ${\bf G}$ is constant. Over a local field this is equivalent to compactness of ${\bf G}(k)$ \cite[Sec. VI, § 1, Th\'eor\`eme]{Oes84}. If $k$ is of characteristic $0$, then every $k$-wound unipotent groups is trivial \cite[Cor. 14.55]{Mil17}, but Rosenlicht's example shows that non-trivial examples exist in positive characteristic. In general, if ${\bf G}$ is any unipotent algebraic group over an arbitrary field $k$, then there exists a unique $k$-split unipotent normal subgroup ${\bf G}_s$ of ${\bf G}$, such that the quotient group ${\bf G}/{\bf G}_s$ is $k$-wound \cite[p.\ 733, § 4.2, Theorem]{Tit13}. In characteristic $0$ we thus have ${\bf G} = {\bf G}_s$, hence we deduce:
\begin{example} Let $k$ be a local field of characteristic $0$ and let ${\bf G}$ be a unipotent algebraic group over $k$. Then every strong approximate lattice and every uniform approximate lattice in ${\bf G}(k)$ is Zariski-dense.
\end{example}
We do not know whether Theorem \ref{BDS} holds for approximate lattices which are not strong. However, the following example shows that it does not hold for \emph{weak approximate lattices}. Here, a discrete approximate subgroup $\Lambda$ of a lcsc group $G$ is called a \emph{weak approximate lattice} if its hull admits a non-trivial $\mu$-stationary probability measure for \emph{some} admissible probability measure $\mu$ on $G$. 
\begin{example}
Consider the algebraic group ${\bf G} := {\bf GL}_1 \ltimes {\mathbb A}^1$ over $\bR$ so that $G = {\bf G}(\bR) = \bR^\times \ltimes \bR$ is the $(ax+b)$-group. By \cite[Sec. 5.4]{BH1} the subgroup $\Lambda := \{1\} \rtimes \bZ$ is a weak approximate lattice in $G$, but its Zariski closure is given by $\{1\} \rtimes \bR$. Thus weak approximate lattices in the $(ax+b)$-group need not be Zariski dense, despite the fact that $G$ is solvable without cocompact algebraic subgroups.
\end{example}


\subsection{Thin approximate subgroups of abelian groups}
If ${\bf G}$ is an algebraic group over a local field $k$, then an approximate subgroup $\Lambda \subset G$ is called \emph{thin} if it is Zariski dense, but not an approximate lattice. In the case where $\Lambda$ is an actual subgroup one recovers the notion of a thin subgroup. It is well-known that nilpotent algebraic groups over $\mathbb R$ do not admit thin subgroups. On the contrary we show:
\begin{proposition} The additive group $\bR^2$ admits thin approximate subgroups.
\end{proposition}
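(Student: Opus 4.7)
The plan is to construct an explicit thin approximate subgroup. Fix an irrational real number $\alpha$, and define
\[
\Lambda := \{(p, q) \in \bZ^2 \colon |p - q\alpha| \leq 1\}.
\]
I will check in turn that $\Lambda$ is a symmetric, discrete approximate subgroup of $\bR^2$, is not relatively dense, and is Zariski dense. Once these four properties are established, the fact that $\bR^2$ is nilpotent together with \cite[Thm.~4.25]{BH1} implies that $\Lambda$ cannot be an approximate lattice (since it is not uniform), so $\Lambda$ is thin.

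Symmetry, containment of the origin, and discreteness are immediate from $\Lambda \subseteq \bZ^2$. The approximate-subgroup condition follows from the triangle inequality: if $(p_1, q_1), (p_2, q_2) \in \Lambda$ then $|(p_1+p_2) - (q_1+q_2)\alpha| \leq 2$, so $\Lambda + \Lambda \subseteq \{(p, q) \in \bZ^2 \colon |p - q\alpha| \leq 2\}$. Given $(p, q)$ in this larger strip, replacing $p$ by the integer $p'$ nearest to $q\alpha$ yields $(p', q) \in \Lambda$ and $|p - p'| \leq 5/2$, so $(p, q) \in \Lambda + F$ for $F := \{(k, 0) \colon k \in \bZ,\ |k| \leq 2\}$. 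The failure of relative density is clear since $\Lambda$ lies in the strip $\{(x, y) \in \bR^2 \colon |x - \alpha y| \leq 1\}$, and translates of this strip by any compact set yield again strips of bounded width about the line $x = \alpha y$, which never cover $\bR^2$.

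The main task is Zariski density, and my plan is to establish it for the Sturmian subset $S := \{(\lfloor q\alpha \rfloor, q) \colon q \in \bZ\} \subseteq \Lambda$, which suffices. Suppose $P \in \bR[x, y]$ vanishes on $S$, and set $F(s, y) := P(y\alpha - s, y) \in \bR[s, y]$. Writing $\lfloor q\alpha \rfloor = q\alpha - \{q\alpha\}$, the vanishing condition becomes $F(\{q\alpha\}, q) = 0$ for every $q \in \bZ$. Since substituting $s := y\alpha - x$ recovers $P$ from $F$, it suffices to prove that any such $F$ must be zero.

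To this end, decompose $F(s, y) = \sum_i s^i F_i(y)$ with $F_i \in \bR[y]$, assume for contradiction that $F \neq 0$, and let $d := \max_i \deg F_i$. Using $F_i(q) = a_i q^{d_i} + O(|q|^{d_i - 1})$, where $a_i$ denotes the leading coefficient of $F_i$ and $d_i := \deg F_i$, one obtains
\[
F(\{q\alpha\}, q) = q^d\, Q(\{q\alpha\}) + O(|q|^{d-1}), \qquad Q(s) := \sum_{i \colon d_i = d} a_i\, s^i,
\]
with $Q \in \bR[s]$ non-zero by construction. The identity $F(\{q\alpha\}, q) = 0$ then forces $Q(\{q\alpha\}) = O(1/|q|)$. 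By the Weyl equidistribution theorem, $(\{q\alpha\})_{q \in \bZ}$ is dense in $[0, 1)$, so along any sequence $q_n$ with $|q_n| \to \infty$ and $\{q_n\alpha\} \to s_0$, continuity of $Q$ yields $Q(s_0) = 0$; as $s_0 \in [0, 1)$ is arbitrary, this forces $Q \equiv 0$, a contradiction. This final interplay between polynomial growth rates in $q$ and the equidistribution of $(\{q\alpha\})$ is the main obstacle; the remaining verifications are routine.
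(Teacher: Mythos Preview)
Your proof is correct. The example you construct is, up to the linear change of coordinates $(m,n)\mapsto(m+n\alpha,\,m-n\alpha)$ (with $\alpha=\sqrt{2}$), the same model-set example the paper uses, and the verification that it is a non-relatively-dense discrete approximate subgroup proceeds identically; both arguments also invoke the same fact from \cite{BH1} that approximate lattices in nilpotent groups are uniform.

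Where your argument genuinely diverges is in establishing Zariski density. The paper appeals to its structure theorem (Theorem~\ref{AASMain}) to conclude that a non--Zariski-dense approximate subgroup would lie in finitely many translates of a fixed line, and then rules out both possible orientations of that line by a short Galois-conjugate argument. You instead give a self-contained analytic proof: any polynomial vanishing on the Sturmian set $\{(\lfloor q\alpha\rfloor,q):q\in\bZ\}$ is shown to be zero by isolating the dominant power of $q$ and using density of $(\{q\alpha\})_{q\in\bZ}$ in $[0,1)$ to force the resulting polynomial $Q$ in the fractional part to vanish identically. Your route is more elementary in that it avoids the algebraic-group machinery of Section~\ref{SecAlgebraicApproximateGroups}; the paper's route is shorter and illustrates how Theorem~\ref{AASMain} can be used in practice. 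A minor remark: you only need density of $(\{q\alpha\})$ (Kronecker), not full Weyl equidistribution, and the bound $|p-p'|\le 5/2$ can of course be sharpened to $|p-p'|\le 2$ since both are integers, as your choice of $F$ already reflects.
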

\begin{proof} We set
\[
\Gamma = \big\{ (m+n\sqrt{2},m-n\sqrt{2}) \mid m,n \in \bZ\} 
\qand
S = \bR \times [-1,1].
\] 
It is easy to check that $\Lambda := \Gamma \cap S$ is an infinite approximate subgroup, but not a uniform approximate lattice, hence not an approximate lattice at all by \cite[Cor. 4.19]{BH1}. It remains to show that $\Lambda$ is Zariski-dense. Otherwise, by Theorem \ref{AASMain}, $\Lambda$ would be contained in a finite union of translates of a fixed proper algebraic subgroup of $G$. Since $\Lambda$ is infinite, $H$ would have to be non-trivial, hence a line. This implies that either $H \subset S$ or that $H \cap S$ is compact.
In the second case, $S \cap \Lambda$ would have to be contained in a compact subset of $G$; since $\Gamma$ is discrete, this implies that 
$\Lambda$ is finite, a contradiction. Thus $H \subset S$, and thus all points of $\Lambda$ lie on a finite union of lines which are parallel to the line $\bR \times \{0\}$. Then there exist 
 $\alpha_1,\ldots,\alpha_N \in [-1,1]$ such that
 \[
 \Lambda \subset \bigcup_{k=1}^N \big\{ (m+n\sqrt{2},m-n\sqrt{2}) \mid m-n\sqrt{2} = \alpha_k \big\}
 \]
 Thus the second coordinate of elements of $\Lambda$ can take only finitely many values, but since the first coordinate is just the Galois conjugate of the second coordinate we deduce that $\Lambda$ is actually finite, which is a contradiction.
\end{proof}

\appendix

\section{Lifting stationary measures}\label{AppProjection}
The purpose of this appendix is to record a proof of the following fact from measure theory, to be used in Subsection \ref{SecJoinings}. Given a lcsc group $G$ and a compact $G$-space $\Omega$ and an admissible probability measure $\mu$ on $G$, we denote by $ {\rm Prob}_\mu(\Omega) \subset {\rm Prob}(\Omega)$ the compact convex sets of $\mu$-stationary, respectively arbitrary probability measures on $\Omega$. 
\begin{lemma}\label{MeasureProjection}
Let $X$ and $Y$ be compact $G$-spaces, and suppose that there exists a continuous surjective $G$-map $\pi : X \ra Y$. Then the induced map
$\pi_* : {\rm Prob}_\mu(X) \ra {\rm Prob}_\mu(Y)$ is surjective as well, and maps ergodic measures surjectively onto ergodic measures. \qed
\end{lemma}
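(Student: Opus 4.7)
The plan is to combine the Markov-Kakutani fixed point theorem (for the existence of a stationary lift) with the Krein-Milman theorem (to refine it to an ergodic lift). Throughout, I work with the weak-$*$-topology on $\mathrm{Prob}(X)$ and $\mathrm{Prob}(Y)$, under which both spaces are compact metrizable and $\pi_*$ is continuous and affine.

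\textbf{Step 1 (surjectivity of $\pi_*$ on stationary measures).} Given $\nu \in \mathrm{Prob}_\mu(Y)$, I would consider the fiber
\[
K := \{\rho \in \mathrm{Prob}(X) \mid \pi_*\rho = \nu\}.
\]
This is closed and convex in $\mathrm{Prob}(X)$, hence compact. To see that $K$ is non-empty, I would appeal to the Kuratowski-Ryll-Nardzewski selection theorem, which provides a Borel section $s : Y \to X$ of the continuous surjection $\pi$ between compact metric spaces; then $s_*\nu \in K$. (Equivalently, $\pi_* : \mathrm{Prob}(X) \to \mathrm{Prob}(Y)$ is an affine continuous map whose image is closed, convex, and contains every Dirac mass, hence equals $\mathrm{Prob}(Y)$.) Now consider the convolution operator $T_\mu : \rho \mapsto \mu * \rho$ on $\mathrm{Prob}(X)$. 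It is continuous and affine, and it preserves $K$: if $\pi_*\rho = \nu$, then $G$-equivariance of $\pi$ together with $\mu$-stationarity of $\nu$ gives $\pi_*(\mu * \rho) = \mu * \pi_*\rho = \mu * \nu = \nu$. The Markov-Kakutani fixed point theorem then produces a fixed point $\tilde\nu \in K$, which is the required $\mu$-stationary lift.

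\textbf{Step 2 (ergodic lifts of ergodic measures).} Now assume $\nu$ is ergodic, and set $K_\mu := K \cap \mathrm{Prob}_\mu(X)$. By Step 1 this set is non-empty, and it is clearly compact and convex. By the Krein-Milman theorem it has an extreme point $\tilde\nu$. I claim that $\tilde\nu$ is in fact extreme in $\mathrm{Prob}_\mu(X)$, which (by the standard characterization of ergodic stationary measures as extreme points) implies ergodicity. Indeed, if $\tilde\nu = t\rho_1 + (1-t)\rho_2$ with $\rho_1,\rho_2 \in \mathrm{Prob}_\mu(X)$ and $t \in (0,1)$, pushing forward yields $\nu = t\, \pi_*\rho_1 + (1-t)\, \pi_*\rho_2$ with $\pi_*\rho_i \in \mathrm{Prob}_\mu(Y)$; ergodicity of $\nu$ forces $\pi_*\rho_1 = \pi_*\rho_2 = \nu$, so $\rho_1,\rho_2 \in K_\mu$, and extremality of $\tilde\nu$ in $K_\mu$ then forces $\rho_1 = \rho_2 = \tilde\nu$.

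\textbf{Main obstacle.} The only non-trivial point is the non-emptiness of $K$, i.e.\ the existence of some lift of $\nu$ to $X$; everything else is a routine application of compactness and convexity. In the applications of this lemma in the paper the spaces involved are closed subsets of $\mathcal C(G)^{\times n}$, which are compact metric, so the measurable selection argument applies without any further hypotheses. If one wanted a version avoiding metrizability, one could instead invoke the Hahn-Banach argument noted above (that $\pi_* : \mathrm{Prob}(X) \to \mathrm{Prob}(Y)$ is surjective because its image is a closed convex subset of $\mathrm{Prob}(Y)$ containing all Dirac masses), but in the present context this refinement is not needed.
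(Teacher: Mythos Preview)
Your proof is correct and follows essentially the same strategy as the paper: Markov--Kakutani for the existence of a stationary lift, then Krein--Milman plus the identical extremality argument for the ergodic refinement. The only minor difference is that the paper establishes non-emptiness of the fiber $K$ via the Hahn--Banach extension argument you mention as an alternative, rather than via measurable selection.
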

\begin{proof} We first show surjectivity of the map $\pi_*: {\rm Prob}(X) \ra {\rm Prob}(Y)$. Thus let $\eta \in {\rm Prob}(Y)$ and define $\eta': \pi^*(C(Y)) \to \mathbb R$ by $\eta'(\pi^*(f)) := \eta(f)$, which is well-defined since $\pi$ is surjective and thus $\pi^*:C(Y) \to C(X)$ is injective. We have $\|\eta'\| \leq \|\eta\| = 1$ and since $\eta'(1) = 1$ we deduce that $\|\eta'\| = 1$. By Hahn-Banach we can thus extend $\eta'$ to a continuous linear functional $\eta''$ of norm $1$ on all of $C(X)$, and by construction $\pi_*\eta'' = \eta$. It thus remains to show only that $\eta''$ is a positive linear functional on $C(X)$. Thus let $f \in C(X)$ be non-negative so that $\|f\|_\infty \geq \|f\|_\infty - f \geq 0$. Since $\|\eta''\|=1$ and $\eta''(1) = 1$ we have 
\[
\|f\|_\infty \geq \|f\|_\infty - f \geq | \eta''(\|f\|_\infty -f)| \geq \eta''(\|f\|_\infty -f) = \|f\|_\infty - \eta''(f),
\]
and hence $\eta''(f) \geq 0$. This shows that $\eta''$ is positive, and hence $\pi_*: {\rm Prob}(X) \ra {\rm Prob}(Y)$ is surjective. Given $\eta \in {\rm Prob}(Y)$ we now defined
weak-$*$-compact convex sets by
\[
F(\eta) := \{\nu \in {\rm Prob}(X) \mid \pi_*\nu = \eta\} \qand F_\mu(\eta) := \{\nu \in {\rm Prob}_\mu(X) \mid \pi_*\nu = \eta\}.
\]
We have just seen that $F(\eta)$ is non-empty for every $\eta \in {\rm Prob}(Y)$, and if $\eta$ is moreover $\mu$-stationary, then it is invariant under convolution by $\mu$, since $\pi$ is $G$-equivariant and thus for all $\nu \in F(\eta)$ we have
\[
\pi_*(\mu*\nu) = \mu\ast (\pi_*\nu) = \mu \ast \eta = \eta.
\]
It then follows from the Markov-Kakutani fixpoint theorem that $F_\mu(\eta)$ is also non-empty. This shows that $\pi_* : {\rm Prob}_\mu(X) \ra {\rm Prob}_\mu(Y)$ is surjective.

For the second statement we first recall from \cite[Cor. 2.7]{BaderShalom} that the ergodic $\mu$-stationary probability measures are precisely the extremal points of the convex compact set of all $\mu$-stationary probability measures. Assume now that $\eta \in {\rm Prob}_\mu(Y)$ is ergodic and let $\nu$ be an extremal point of $F_\mu(\eta)$, which exists by 
the Krein-Milman theorem since $F_\mu(\eta) \neq \emptyset$. We claim that $\nu$ is ergodic, i.e. an extremal point of ${\rm Prob}_\mu(X)$. Otherwise we could write $\nu$ as $\nu = \alpha_1\nu_1+ \alpha_2 \nu_2$ for some $\alpha_1, \alpha_2 \in (0, 1)$ with $\alpha_1 + \alpha_ 2 = 1$ and $\nu_1, \nu_2 \in {\rm Prob}_\mu(X)$. But then
\[
\eta = \pi_*\nu = \alpha_1 \pi_*\nu_1 + \alpha_2 \pi_*\nu_2,
\]
and hence ergodicity of $\eta$ forces $\pi_*\nu_1 = \pi_*\nu_2 = \eta$ and hence $\nu_1, \nu_2 \in F_\mu(\eta)$. This contradicts extremality of $\nu$ in $F_\mu(\eta)$, and hence $\nu$ must have been ergodic.
\end{proof}

\section{The unimodularity theorem revisited}\label{AppUnimodular}
The following theorem is used in the proof of Borel density in the uniform case:
\begin{theorem}[Unimodularity theorem, refined version]\label{Unimodular}
Let $G$ be a lcsc group which contains a uniform approximate lattice $\Lambda$. Then $G$ is unimodular.
\end{theorem}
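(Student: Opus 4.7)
The plan is to reduce Theorem~\ref{Unimodular} to the compactly generated case handled in \cite[Thm.~5.8]{BH1}, by exhibiting $G$ as an increasing union of compactly generated open subgroups, each of which carries a uniform approximate lattice inherited from $\Lambda$.

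Using relative density, I would first choose a compact symmetric identity neighbourhood $K \subset G$ with $G = \Lambda K$, and fix the finite set $F = F_\Lambda$ with $\Lambda^2 \subset F\Lambda$ from \eqref{FLambda}. Since $G$ is lcsc, hence $\sigma$-compact, write $G = \bigcup_n B_n$ as an increasing exhaustion by compact sets, and set
\[
G_n := \langle K \cup (\Lambda \cap B_n)\rangle.
\]
Because $\Lambda \cap B_n$ is finite, $K \cup (\Lambda \cap B_n)$ is compact, so each $G_n$ is compactly generated; and since $K$ is a neighbourhood of $e$, each $G_n$ is open in $G$. The subgroups $(G_n)$ are increasing with $\bigcup_n G_n = G$ (any $g = \lambda k \in \Lambda K$ lies in $G_n$ once $\lambda \in B_n$), so since $F$ is finite I may fix $n_0$ with $F \subset G_n$ for all $n \geq n_0$.

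The main step is to verify that $\Lambda_n := \Lambda \cap G_n$ is a uniform approximate lattice in $G_n$ for $n \geq n_0$: symmetry, discreteness and $e \in \Lambda_n$ are immediate; relative density in $G_n$ follows because any $g = \lambda k \in G_n$ with $k \in K \subset G_n$ forces $\lambda = gk^{-1} \in \Lambda_n$, so $G_n = \Lambda_n K$; and the approximate subgroup relation $\Lambda_n^2 \subset F\Lambda_n$ uses the inclusion $F \subset G_n$, which forces $F\Lambda \cap G_n = F\Lambda_n$, and hence $\Lambda_n^2 \subset \Lambda^2 \cap G_n \subset F\Lambda \cap G_n = F\Lambda_n$. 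Applying \cite[Thm.~5.8]{BH1} to the compactly generated group $G_n$ then yields unimodularity of $G_n$. Since $G_n$ is an open subgroup of $G$, the modular function of $G$ restricts to that of $G_n$, so $\Delta_G|_{G_n} \equiv 1$, and $\bigcup_{n \geq n_0} G_n = G$ forces $\Delta_G \equiv 1$. The one genuine obstacle is arranging the inclusion $F \subset G_n$ so that the approximate subgroup relation descends to $\Lambda_n$ with the same finite overlap set $F$; this is exactly what requires passing to sufficiently large $n$, and is the only place where the compact generation hypothesis of \cite[Thm.~5.8]{BH1} plays a role.
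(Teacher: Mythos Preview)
Your reduction is correct and takes a genuinely different route from the paper. The paper proves Theorem~\ref{Unimodular} directly by an analytic argument: assuming $G$ is non-unimodular, it constructs a strictly positive probability density $\rho$ on $G$ with $\int \rho\,\Delta_G\, dm_G > 1$ (Lemma~\ref{LemmaUniNew}), then uses the periodization map and a $\mu$-stationary measure on $\Omega_\Lambda$ to produce a density $u$ satisfying $\rho \ast u = u$, and finally derives a contradiction by iterating this relation and exploiting the growth of $\left(\int \rho\,\Delta_G\right)^n$. Your argument instead exhausts $G$ by compactly generated open subgroups $G_n$, checks that $\Lambda_n = \Lambda \cap G_n$ is a uniform approximate lattice in each $G_n$, and invokes \cite[Thm.~5.8]{BH1} together with the standard fact that $\Delta_G|_{G_n} = \Delta_{G_n}$ for open subgroups.

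Your approach is shorter and more elementary, and the key verification that $\Lambda_n^2 \subset F\Lambda_n$ (via $F\Lambda \cap G_n = F\Lambda_n$ once $F \subset G_n$) is clean. The paper's direct proof, by contrast, is self-contained in the sense of not leaning on the compactly generated case as a black box, and its construction of $\rho$ (positive on all of $G$, yet with controlled $\Delta_G$-integral) may be of independent interest. One minor expository point: your final sentence slightly understates matters, since compact generation of $G_n$ is what allows you to invoke \cite[Thm.~5.8]{BH1} at all, not merely what makes $F \subset G_n$ relevant.
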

Under the additional assumption that $G$ be \emph{compactly generated}, this theorem was established in \cite[Thm. 5.8]{BH1}. We revisit the proof to establish the above stronger version; we use this opportunity to correct a few inequalities in the original proof. The following lemma replaces \cite[Lemma 5.10]{BH1}. Here, $m_G$ denotes a fixed choice of left-Haar measure on $G$ and $\Delta_G$ denotes the modular function of $G$.
\begin{lemma}\label{LemmaUniNew} Assume that $G$ is a non-unimodular lcsc group. Then there exists $\rho \in C(G)$ with the following properties:
\begin{enumerate}[(i)]
\item $\rho(g) > 0$ for all $g\in G$ (hence in particular ${\rm supp}(\rho) = G$).
\item  $\int_G \rho(t)\, dm_G(t) = 1$.
\item $\int_G \rho(t) \Delta_G(t)\, dm_G(t) > 1$.
\end{enumerate}
\end{lemma}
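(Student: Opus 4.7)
The plan is to build $\rho$ as a normalized sum of two pieces: a compactly supported piece $\rho_1^{(n)}$ that loads mass onto a region where $\Delta_G$ is very large (to force (iii)) plus a globally strictly positive correction $\rho_2$ (to guarantee (i)); then rescale for (ii). Since $G$ is non-unimodular, the modular function $\Delta_G\colon G \to \bR_{>0}$ is a continuous homomorphism not identically equal to $1$, so there is $g_0 \in G$ with $\Delta_G(g_0) > 1$ (replace by $g_0^{-1}$ if necessary). Pick any $\rho_1 \in C_c(G)$ with $\rho_1 \geq 0$ and $\int_G \rho_1 \, dm_G = 1$, which exists by local compactness and Urysohn's lemma. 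On the compact support of $\rho_1$ the continuous function $\Delta_G$ is bounded, so $c := \int_G \rho_1 \Delta_G \, dm_G$ is finite and positive. For each $n$ the left-translate $\rho_1^{(n)}(x) := \rho_1(g_0^{-n} x)$ still has $m_G$-integral $1$ by left-invariance, while the change of variables $x = g_0^n y$ together with the homomorphism property $\Delta_G(g_0^n y) = \Delta_G(g_0)^n \Delta_G(y)$ yields $\int_G \rho_1^{(n)} \Delta_G \, dm_G = \Delta_G(g_0)^n c$, which tends to $\infty$ as $n \to \infty$.

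Next, I would construct a strictly positive continuous function $\rho_2$ on $G$ such that both $\int_G \rho_2 \, dm_G$ and $\int_G \rho_2 \Delta_G \, dm_G$ are finite. Using $\sigma$-compactness of $G$, write $G = \bigcup_n K_n$ with $K_n$ compact and $K_n$ contained in the interior of $K_{n+1}$. By Urysohn's lemma pick $\phi_n \in C_c(G)$ with $\phi_n \equiv 1$ on $K_n$, $0 \leq \phi_n \leq 1$, and $\supp(\phi_n) \subset K_{n+1}$. Set $\rho_2 := \sum_n a_n \phi_n$ with weights $a_n > 0$ so rapidly decreasing that both $\sum_n a_n \, m_G(K_{n+1})$ and $\sum_n a_n \, \bigl(\sup_{K_{n+1}} \Delta_G\bigr) m_G(K_{n+1})$ converge (this is possible because each $m_G(K_{n+1})$ and each $\sup_{K_{n+1}} \Delta_G$ is finite). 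Since every $x \in G$ lies in some $K_n$, and then $\phi_m(x) = 1$ for all $m \geq n$, the series converges locally uniformly, so $\rho_2$ is continuous; and the estimate $\rho_2(x) \geq \sum_{m \geq n} a_m > 0$ ensures strict positivity everywhere.

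Finally, with $B := \int_G \rho_2 \, dm_G$ and $C := \int_G \rho_2 \Delta_G \, dm_G$ both finite, I would pick $n$ large enough that $c\, \Delta_G(g_0)^n > 1 + B - C$, and set
\[
\rho := \frac{\rho_1^{(n)} + \rho_2}{1 + B}.
\]
Then $\rho \in C(G)$ is strictly positive (by the $\rho_2$ summand), $\int_G \rho \, dm_G = (1 + B)/(1 + B) = 1$, and $\int_G \rho \Delta_G \, dm_G = (c\, \Delta_G(g_0)^n + C)/(1 + B) > 1$, giving (i)--(iii). The main obstacle I anticipate is the second step: arranging $\rho_2$ to be \emph{globally} strictly positive while being integrable against both $m_G$ and $\Delta_G m_G$, since on a general lcsc group both the Haar measures of exhausting compacta and the supremum of $\Delta_G$ over them may grow very rapidly. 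This is overcome by letting the weights $a_n$ decay fast enough to dominate both growth rates simultaneously, which is the essence of the partition-of-unity construction on a $\sigma$-compact space.
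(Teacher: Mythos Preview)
Your proof is correct. The approach is close in spirit to the paper's but the execution differs. Both arguments rest on the same two observations: (a) left-translating an integrable density multiplies its $\Delta_G$-weighted integral by a power of $\Delta_G(g_0)$, so one can make that integral as large as desired; and (b) a weighted sum of nonnegative bumps with sufficiently fast-decaying coefficients produces a strictly positive continuous function that is integrable against both $m_G$ and $\Delta_G\,m_G$.

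Where you and the paper diverge is in how the globally positive piece is built and how the two ingredients are combined. The paper sums left-translates of a single $\varphi\in C_c(G)$ over a countable \emph{dense} subset $S\subset G$ with weights chosen via an auxiliary sequence lemma, obtaining a single $\rho_0$ satisfying (i), (ii) and the finiteness condition $\int \rho_0\Delta_G<\infty$; it then forms a convex combination of $\rho_0$ with one further left-translate of $\rho_0$ to push the $\Delta_G$-integral above $1$. You instead build the positivity piece $\rho_2$ from Urysohn bumps subordinate to a compact \emph{exhaustion}, keep the boosting piece $\rho_1^{(n)}$ separate, and normalize at the end. Your decomposition is arguably cleaner and more modular: the roles of ``make it everywhere positive'' and ``make the $\Delta_G$-integral large'' are carried by disjoint summands, and the final normalization is explicit. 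The paper's approach has the minor advantage that one only ever handles a single bump $\varphi$; yours requires a full exhaustion and a family of bumps, but this is routine on an lcsc group. One small point worth making explicit in your write-up: the convergence of $\sum_n a_n$ (needed for uniform convergence and continuity of $\rho_2$) follows from your hypothesis $\sum_n a_n\,m_G(K_{n+1})<\infty$ together with $m_G(K_{n+1})\geq m_G(K_2)>0$, or can simply be imposed as a third condition on the weights.
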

We will use following simple observation, which we leave as an exercise:
\begin{lemma}\label{SequenceLemma} Let $S$ be a countable set. Then for every function $b: S \to [0, \infty)$ there exists $a:S \to (0, \infty)$ such that
\[\pushQED{\qed}
\sum_{s \in S} a(s) = 1 \qand \sum_{s \in S} a(s)b(s) < \infty.\qedhere
\popQED
\]
\end{lemma}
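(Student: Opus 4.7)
The statement is elementary and reduces to an explicit construction, so the plan is simply to write down a formula for $a$ and verify the two required properties. The only nuisance is that $b$ may be unbounded (or zero) on $S$, so we cannot hope to choose $a$ uniformly; instead we must build a damping factor into $a$ itself that depends on $b$, while also ensuring summability.

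First I would dispose of the trivial case where $S$ is finite: any strictly positive normalized function $a$ works, since then $\sum_{s} a(s) b(s)$ is a finite sum. So assume $S$ is countably infinite and fix an enumeration $S = \{s_1, s_2, \dots\}$. The key construction is to define the unnormalized weights
\[
\widetilde{a}(s_n) := \frac{2^{-n}}{1 + b(s_n)} \qquad (n \geq 1).
\]
Clearly $\widetilde{a}(s_n) > 0$ for every $n$, and since $1 + b(s_n) \geq 1$ we have $\widetilde{a}(s_n) \leq 2^{-n}$, so
\[
Z := \sum_{n=1}^\infty \widetilde{a}(s_n) \in (0, 1].
\]
Now set $a(s_n) := \widetilde{a}(s_n)/Z$. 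By construction $a$ is strictly positive on $S$ and $\sum_{s \in S} a(s) = 1$.

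For the weighted sum, use that $b(s_n)/(1 + b(s_n)) \leq 1$ for every $n$ to estimate
\[
\sum_{s \in S} a(s) b(s) \;=\; \frac{1}{Z} \sum_{n=1}^\infty \frac{2^{-n}\, b(s_n)}{1 + b(s_n)} \;\leq\; \frac{1}{Z}\sum_{n=1}^\infty 2^{-n} \;=\; \frac{1}{Z} \;<\; \infty.
\]
This gives both required properties and completes the proof. There is really no hard step here; the only point worth emphasizing is the choice of the damping factor $1/(1+b(s_n))$, which works regardless of whether $b(s_n)$ vanishes or grows arbitrarily, combined with the geometric weights $2^{-n}$ that guarantee summability of $\widetilde{a}$.
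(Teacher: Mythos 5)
Your construction is correct and complete: the damping factor $1/(1+b(s_n))$ together with the geometric weights $2^{-n}$ handles both unbounded and vanishing $b$ uniformly, and the estimates you give are exactly right. The paper leaves this lemma as an exercise without supplying its own proof, so there is nothing to compare against; your argument is the natural one.
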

\begin{proof}[Proof of Lemma \ref{LemmaUniNew}] We first construct a function $\rho_0 \in C(G)$ which satisfies (i), (ii) and
\begin{enumerate}[(iii')]
\item $\gamma := \int_G \rho_0(s) \Delta_G(s)\, dm_G(s) <\infty$.
\end{enumerate}
To construct $\rho_0$ we pick a countable dense subset $S \subset G$ (which exists since $G$ is second countable) and define $b:S \to [0, \infty)$ by $b(s) := \Delta_G(s)^{-1}$. Using Lemma \ref{SequenceLemma} we then choose a function $a: S \to [0, \infty)$ such that
\begin{equation}\label{aDelta}
\sum_{s \in S}a(s) = 1 \qand \sum_{s \in S} a(s)\Delta_G(s)^{-1} < \infty.
\end{equation}
Next we pick a compactly supported function $\varphi \in C_c(G)$ with $\varphi \geq 0$ and $\|\varphi\|_1 = 1$ and set
\[
\rho_0(t) := \sum_{s \in S} a(s)\varphi(st).
\]
We check that $\rho_0(t)$ satisfies (i), (ii) and (iii'): Property (i) is immediate, since $S$ is dense in $G$ and $\varphi$ is positive on a non-empty open set, and (ii) follows from
\[
\int_G \rho_0(t)\, dm_G(t) =  \sum_{s \in S} a(s) \int_G \varphi(st)\, dm_G(t) =  \sum_{s \in S} a(s) \int_G \varphi(t)\, dm_G(t) = \sum_{s \in S} a(s) = 1.
\]
Finally, (iii') follows from the fact that
\[
 \int_G \rho(t) \Delta_G(t)\, dm_G(t) = \sum_{s \in S}a(s)  \int_G\varphi(st) \Delta_G(t)\, dm_G(t) = \sum_{s \in S} a(s) \Delta(s)^{-1} \int_G\varphi(t) \Delta_G(t) dm_G(t). 
\] 
Since $\varphi$ is compactly supported, the integral $\int_G\varphi(t) \Delta_G(t) dm_G(t)$ converges, and thus the sum is finite by \eqref{aDelta}. We have thus constructed $\rho_0$ satisfying (i), (ii) and (iii').

Now we choose $a >0$ such that $a\gamma > 1/2$; since $\Delta_G$ is unbounded we then find $s\in G$ such that $(1-a)\gamma\Delta(s)^{-1} > 1/2$. We claim that 
\[
\rho(t) := a\rho_0(t) + (1-a)\rho_0(st)
\]
satisfies (i)-(iii). Here, (i) and (ii) are immediate from the corresponding properties of $\rho_0$ and left-invariance of $m_G$. Concerning (iii) we observe that, using left-invariance of $m_G$ and the fact that $\Delta_G$ is a homomorphism,
\begin{eqnarray*}
\int_G \rho(t)\delta(t) \, dm_G(t) &=& a\gamma + (1-a) \int_G \rho_0(t)\Delta_G(s^{-1}t) dm_G(t)\\
&=& a\gamma + (1-a)\gamma \Delta_G(s)^{-1} \quad > \quad 1/2+1/2 \quad = \quad 1.
\end{eqnarray*}
This establishes (iii) and finishes the proof.
\end{proof}
Towards the proof of Theorem \ref{Unimodular} we now assume for contradiction that $G$ contains a uniform approximate lattice $\Lambda$, but is non-unimodular. We then choose  $\rho$ as in Lemma \ref{LemmaUniNew} and define an admissible probability measure $\mu$ on $G$ by
\[\mu := \rho m_G.\] 
By the Markov--Kakutani fixpoint theorem there then exists a $\mu$-stationary probability measure $\nu$ on $\Omega_\Lambda$. Since $\Lambda$ is relatively dense we have $\emptyset \not \in \Omega_\Lambda$, and since $\Lambda$ is uniformly discrete, every element of $\Omega_\Lambda$ is uniformly discrete and hence $G \not \in \Omega_\Lambda$. This shows that $\nu$ is non-trivial.

As explained in \cite[Sec. 5.1]{BH1} we have a well-defined continuous periodization map 
\[
 \mathcal P: C_c(G) \to C(\Omega_\Lambda), \quad \mathcal Pf(\Lambda') = \sum_{x \in \Lambda'} f(x).
\]
In particular we can define a Radon measure on $G$ by $\eta(f) := \nu(\mathcal Pf)$.
\begin{lemma} There exists $u \in L^1_{\rm loc}(G, m_G)$ such that $\eta = u \, m_G$.
\end{lemma}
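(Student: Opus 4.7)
The plan is to show that $\eta$ is a positive Radon measure on $G$ that is absolutely continuous with respect to $m_G$; the existence of an $L^1_{\mathrm{loc}}$-density $u$ then follows from the Radon--Nikodym theorem together with local finiteness of $\eta$.

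First I would verify that $\eta$ is locally finite. Since $\Lambda$ is uniformly discrete, there is an open identity neighbourhood $V\subset G$ with $\Lambda^{-1}\Lambda\cap V\subset\{e\}$. By a standard limit argument based on conditions (CF1) and (CF2), every $\Lambda'\in\Omega_\Lambda$ inherits the same separation, so picking a compact symmetric identity neighbourhood $W$ with $WW^{-1}\subset V$ and applying a packing argument gives a uniform bound $\#(\Lambda'\cap K)\leq m_G(KW)/m_G(W)=:M_K$ for every compact $K\subset G$ and every $\Lambda'\in\Omega_\Lambda$. For $f\in C_c(G)^{+}$ with $\supp(f)\subset K$ this yields $\eta(f)\leq\|f\|_\infty M_K<\infty$, so $\eta$ extends to a positive Radon measure on $G$ via the Riesz representation theorem.

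The heart of the argument is absolute continuity. Let $N\subset G$ be a Borel $m_G$-null set; by inner regularity of $\eta$ it suffices to treat the case where $N$ is compact. In that case $\mathcal P 1_N(\Lambda')=\#(\Lambda'\cap N)$ is bounded (by $M_N$) and upper semicontinuous on $\Omega_\Lambda$, hence Borel measurable and $\nu$-integrable; approximating $1_N$ from above by a decreasing sequence $f_k\in C_c(G)$ and using the uniform bound together with dominated convergence on both sides of $\eta(f_k)=\nu(\mathcal P f_k)$ gives $\eta(N)=\nu(\mathcal P 1_N)$. The $\mu$-stationarity of $\nu$ extends from $C(\Omega_\Lambda)$ to bounded Borel functions and iterates to $\nu=\mu^{\ast n}\ast\nu$ for every $n\geq 1$. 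Applying this to $\mathcal P 1_N$ and interchanging the resulting non-negative integrals by Fubini--Tonelli yields
\[
\eta(N)=\int_{\Omega_\Lambda}\sum_{y\in\Lambda'}\mu^{\ast n}(Ny^{-1})\,d\nu(\Lambda').
\]
Since $\mu=\rho\,m_G$ is absolutely continuous with respect to $m_G$, so is every convolution power $\mu^{\ast n}$; combined with $m_G(Ny^{-1})=\Delta_G(y)^{-1}m_G(N)=0$, this forces every summand to vanish, and hence $\eta(N)=0$. This proves $\eta\ll m_G$, so the Radon--Nikodym theorem produces a non-negative Borel function $u$ with $\eta=u\cdot m_G$, and local finiteness of $\eta$ yields $u\in L^{1}_{\mathrm{loc}}(G,m_G)$. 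The only mildly delicate point is the measurability of $\mathcal P 1_N$, which is why I reduce to compact $N$ via regularity rather than working directly with arbitrary Borel null sets.
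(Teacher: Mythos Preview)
Your argument is correct, but it takes a different route from the paper. The paper proves the lemma by showing that $\eta$ is $G$-\emph{quasi-invariant}: since $\nu$ is $\mu$-stationary with $\mu$ admissible, $\nu$ is $G$-quasi-invariant on $\Omega_\Lambda$, and equivariance of the periodization map $\mathcal P$ transfers this to $\eta$ on $G$. One then invokes the fact that $[m_G]$ is the unique $G$-quasi-invariant measure class on $G$, so $\eta\ll m_G$ automatically. Your approach instead bypasses quasi-invariance entirely and exploits directly that $\mu\ll m_G$: writing $\eta(N)=\int_{\Omega_\Lambda}\sum_{y\in\Lambda'}\mu(Ny^{-1})\,d\nu$ via one application of stationarity, you kill each summand because $m_G(Ny^{-1})=\Delta_G(y)^{-1}m_G(N)=0$. (The iteration to $\mu^{\ast n}$ is harmless but unnecessary; $n=1$ already suffices.) The paper's argument is shorter and more conceptual, and in fact yields the stronger conclusion that $\eta$ and $m_G$ are \emph{equivalent}; your argument is more explicit and self-contained, needing only the absolute continuity of $\mu$ rather than the full machinery of quasi-invariance and uniqueness of the Haar class. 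Both are perfectly valid.
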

\begin{proof} Since $[m_G]$ is the unique $G$-invariant measure class on $G$, it suffices to show that $\eta$ is $G$-quasi-invariant. This will follows from the fact that $\nu$ is $\mu$-stationary and hence $G$ quasi-invariant. To see this, let $K \subset G$ be a compact subset of positive measure with characteristic function $1_K$. There then exists a compact set $L \supset K$ and functions $f_n \in C_c(G)$ supported in $L$ such that $f_n \geq 1_K$ and $\liminf f_n = 1_K$. Then $\lim \eta(f_n) = \eta(K) > 0$, and hence for all sufficiently large $n$ we have 
\[
\nu(\mathcal P(f_n)) = \eta(f_n) >0
\]
 Since $\mathcal P$ is $G$-equivariant, $\nu$ is $G$-quasi-invariant and $\nu(\mathcal P(f_n))>0$ for all sufficiently large $n$, we have for all $g \in G$,
\[
\eta(gK) =\lim \eta(g.f_n) = \lim \nu(\mathcal P(g.f_n)) = \lim \nu(g. \mathcal P(f_n)) = \lim g_*\nu(\mathcal P(f_n)) > 0.
\]
Since $G$ is $\sigma$-compact, this proves that $\eta$ is $G$-quasi-invariant.
\end{proof}
It turns out that the density $u$ is $\mu$-stationary in the following sense; this statement includes in particular the fact that the convolution of $\mu$ with $u$ converges. 
\begin{lemma}\label{uStationary} For $m_G$-almost every $x \in G$ we have
\[
u(x)  \quad = \quad \int_G \rho(s)u(s^{-1}x) dm_G(s) \quad =\quad \int _G u(s^{-1}x) d\mu(s) \quad < \quad \infty.
\]
\end{lemma}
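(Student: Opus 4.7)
The plan is to test the Radon measure $\eta = u \cdot m_G$ against non-negative functions in $C_c(G)$ and convert the $\mu$-stationarity of $\nu$ into a pointwise convolution equation for $u$, using the $G$-equivariance of the periodization map $\mathcal P$.

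The starting point is the equivariance identity of $\mathcal P$: for any $f \in C_c(G)$ and $g \in G$, if I set $(R_g f)(x) := f(gx)$, then
\[
(\mathcal Pf)(g\Lambda') \;=\; \sum_{y \in g\Lambda'} f(y) \;=\; \sum_{x \in \Lambda'} f(gx) \;=\; \mathcal P(R_g f)(\Lambda').
\]
Applying the $\mu$-stationarity $\nu = \int_G g_*\nu \, d\mu(g)$ to the continuous function $\mathcal Pf$, and using the identity above, I obtain
\[
\eta(f) \;=\; \nu(\mathcal Pf) \;=\; \int_G \nu(\mathcal P(R_g f)) \, d\mu(g) \;=\; \int_G \eta(R_g f) \, d\mu(g).
\]

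Next I would unfold both sides using $\eta = u\, m_G$ and $\mu = \rho\, m_G$, and perform the substitution $y = gx$, which by left-invariance of $m_G$ gives
\[
\eta(R_g f) \;=\; \int_G f(gx)\, u(x)\, dm_G(x) \;=\; \int_G f(y)\, u(g^{-1}y)\, dm_G(y).
\]
Substituting this back and restricting to $f \geq 0$ (so that the double integrand is non-negative), Fubini--Tonelli applies unconditionally and yields
\[
\int_G f(y)\, u(y)\, dm_G(y) \;=\; \int_G f(y) \left( \int_G \rho(g)\, u(g^{-1}y)\, dm_G(g) \right) dm_G(y).
\]

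Since this equality holds for every non-negative $f \in C_c(G)$, standard measure-theoretic arguments give
\[
u(y) \;=\; \int_G \rho(g)\, u(g^{-1}y)\, dm_G(g) \quad \text{for } m_G\text{-a.e.\ } y \in G,
\]
which coincides with the second formula in the statement via $d\mu = \rho\, dm_G$. The finiteness assertion is then automatic: the right-hand side equals $u(y)$, and $u \in L^1_{\mathrm{loc}}(G, m_G)$ is finite almost everywhere. I do not anticipate a serious obstacle; the only subtle points are the sign condition needed to apply Fubini--Tonelli (resolved by testing first against $f \geq 0$) and the appearance of the \emph{right}-translation $R_g$ in the equivariance identity, which reflects that the left action of $G$ on $\Omega_\Lambda$ by $\Lambda' \mapsto g\Lambda'$ corresponds via $\mathcal P$ to right translation of the argument of $f$.
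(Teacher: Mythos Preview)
Your proof is correct and follows essentially the same route as the paper: both arguments use the $G$-equivariance of $\mathcal P$ to convert $\mu$-stationarity of $\nu$ into the identity $\eta(f) = \int_G \eta(R_g f)\, d\mu(g)$, then unfold via $\eta = u\, m_G$, apply left-invariance, and use Fubini--Tonelli on non-negative test functions to obtain the pointwise convolution equation. The only differences are notational (the paper writes your $R_g f$ as $g^{-1}.f$) and cosmetic (the paper adds a sentence about decomposing general $f$ as $f_+ - f_-$, which is not actually needed for the a.e.\ conclusion).
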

\begin{proof} Since $\nu$ is $\mu$-stationary and $\mathcal P$ is $G$-equivariant we have for every $f \in C_c(G)$.
\begin{eqnarray*}
\eta(f) &=& \nu(\mathcal Pf) \quad  = \quad \mu \ast \nu(\mathcal P f) \quad = \quad \int_G  \nu(g^{-1}.\mathcal Pf)\,d\mu(g)\\
&=&  \int_G  \nu(\mathcal P(g^{-1}.f))\,d\mu(g) \quad = \quad \int_G \eta(g^{-1}.f) d\mu(g).
\end{eqnarray*}
Since $\eta = u\, m_G$ we deduce from left-invariant of $m_G$ that
\[
m_G(f \cdot u) = \eta(f) = \int_G m_G((g^{-1}.f) \cdot u) d\mu(g) =  \int_G m_G(f \cdot (g.u)) d\mu(g).
\]
If $f \geq 0$, then we can apply Fubini to obtain
\begin{eqnarray*}
\int_G f(x) u(x)\, dm_G(x) &=& \int_G m_G(f \cdot (g.u)) d\mu(g)\\ &=& \int_G \left(\int_G f(x) u(g^{-1}x)dm_G(x)\right) d\mu(g)\\ &=& \int_G f(x) \left(\int_G u(g^{-1}x) d\mu(g)\right) dm_G(x).
\end{eqnarray*}
If $f \in C_c(G)$ is arbitrary, then we can write $f = f_+ - f_-$ with $f_+, f_- \geq 0$ and apply this formula to $f_+$ and $f_-$. The lemma follows.
\end{proof}
\begin{corollary} There exists a lower-semicontinuous positive function $v: G \to (0, \infty)$ such that $u(x) = v(x)$ for $m_G$-almost all $x \in G$. 
\end{corollary}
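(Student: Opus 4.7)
The plan is to define $v$ as the convolution
$$v(x)\;:=\;\int_G \rho(s)\,u(s^{-1}x)\,dm_G(s)\;\in\;[0,\infty],$$
and verify the four required properties separately. To move the dependence on $x$ into the smooth factor, I would first perform the change of variables $s=xy^{-1}$, using left-invariance of $m_G$ together with the standard pushforward identity $\int_G f(t^{-1})\,dm_G(t)=\int_G f(t)\Delta_G(t)^{-1}\,dm_G(t)$ for inversion. Bookkeeping the resulting Jacobian (inversion contributes $\Delta_G(y)^{-1}$ and the subsequent left-translation by $x$ is $m_G$-preserving) yields the alternative representation
$$v(x)\;=\;\int_G \rho(xy^{-1})\,\Delta_G(y)^{-1}\,u(y)\,dm_G(y),$$
in which $x$ only enters through the continuous, strictly positive kernel $\rho(xy^{-1})\Delta_G(y)^{-1}$, while the only rough factor $u$ depends solely on the integration variable $y$.

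From this representation, three of the four required properties are essentially formal. The equality $v(x)=u(x)$ for $m_G$-almost every $x$ is the direct content of Lemma \ref{uStationary}. Lower semicontinuity follows from Fatou's lemma: if $x_n\to x$ in $G$, continuity of $\rho$ yields $\rho(x_n y^{-1})\to \rho(xy^{-1})$ pointwise in $y$, and since $\Delta_G(y)^{-1}u(y)\geq 0$, we obtain $\liminf_n v(x_n)\geq v(x)$. Strict positivity of $v$ at every point uses Lemma \ref{LemmaUniNew}(i): the kernel $\rho(xy^{-1})\Delta_G(y)^{-1}$ is strictly positive on all of $G\times G$, and $u$ cannot be $m_G$-a.e.\ zero, for otherwise the Radon measure $\eta=u\,m_G$ would vanish, forcing the probability measure $\nu$ to coincide with $\delta_\emptyset$ and contradicting the non-triviality of $\nu$ that was established just before Lemma \ref{uStationary}; thus $\{y:u(y)>0\}$ has positive $m_G$-measure and the integral is strictly positive for every $x\in G$.

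The main obstacle is therefore the last requirement, namely that $v(x)$ be \emph{finite} at every $x\in G$, not merely at $m_G$-a.e.\ $x$. Since $v$ is lower semicontinuous and agrees $m_G$-a.e.\ with $u\in L^1_{\rm loc}(G)$, the exceptional set $\{v=+\infty\}$ is necessarily a $G_\delta$ null set, but lower semicontinuity alone does not rule out that it be non-empty (e.g.\ $x\mapsto |x|^{-1/2}$ on $\mathbb R$). The plan to eliminate this is to iterate the stationarity relation: from $u=\mu*u$ one deduces $u=\mu^{*n}*u$ for every $n\geq 1$, and the iterated density $\rho^{*n}$ still lies in $L^1(G)\cap L^\infty(G)$, remains strictly positive and continuous, and becomes more regular (for $n\geq 2$ even uniformly continuous, with bounded sup-norm controlled by $\|\rho\|_1\|\rho\|_\infty$). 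Repeating the construction with $\rho*\rho$ in place of $\rho$, and exploiting the sharper pointwise bounds available for the iterated kernel (combined with the uniform-discreteness-based bound $\eta(A)\leq m_G(AU)/m_G(U)$ valid for small symmetric neighborhoods $U$ of the identity), produces an everywhere-finite, lower semicontinuous, strictly positive function agreeing with $u$ almost everywhere, which is the sought function $v$.
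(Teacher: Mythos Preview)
Your first two paragraphs are correct and coincide with the paper's argument. The paper defines $v$ by the same convolution formula, obtains the a.e.\ equality $v=u$ directly from Lemma~\ref{uStationary}, and deduces strict positivity from the observation that $v(x_0)=0$ would force $u=0$ $m_G$-a.e.\ and hence $\eta=0$. The only difference is in the lower-semicontinuity step: instead of your Fatou argument after the change of variables, the paper picks an increasing sequence $\rho_n\in C_c(G)$ with $\rho_n\nearrow\rho$, notes that each truncated convolution $x\mapsto\int_G\rho_n(s)u(s^{-1}x)\,dm_G(s)$ is a finite continuous function (convolution of $u\in L^1_{\rm loc}$ against a compactly supported function), and concludes that $v$ is lower semicontinuous as a supremum of continuous functions. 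Both routes are standard and equally short.

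Your third paragraph is unnecessary. The paper's own proof makes no attempt to show $v(x)<\infty$ at \emph{every} point; the codomain $(0,\infty)$ in the statement should really be read as $(0,\infty]$. This does not affect the application: the proof of Theorem~\ref{Unimodular} only uses that $v$ is lower semicontinuous and strictly positive (to get $\inf_{k\in K}v(k)\Delta_G(k)>0$ on the compact $K$), together with the fact that $\{v=\infty\}$ is $m_G$-null (immediate from $v=u$ a.e.\ and $u\in L^1_{\rm loc}$), so that $\sum_{c\in F}v(gc^{-1})<\infty$ for a.e.\ $g$, which is all that is needed for the final contradiction. Your iterated-convolution plan is also incomplete as written: replacing $\rho$ by a bounded $\rho^{*n}$ in your second representation only yields $v(x)\leq\|\rho^{*n}\|_\infty\int_G\Delta_G(y)^{-1}u(y)\,dm_G(y)$, and there is no reason for this global integral to converge. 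The uniform-discreteness bound on $\eta$ might be made to work with additional effort, but you do not need it.
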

\begin{proof} Let $\rho_n$ be an increasing sequence in $C_c(G)$ with $\rho_n \nearrow \rho$. By monotone convergence and Lemma \ref{uStationary} we then have for almost all $x \in G$,
\[
u(x) = v(x) :=  \int_G \rho(s)u(s^{-1}x) dm_G(s) = {\rm sup}  \int_G \rho_n(s)u(s^{-1}x) dm_G(s).
\]
The integrals on the right hand side define continuous functions (as convolutions with $\rho_n \in C_c(G)$), hence $v$ is lower-semicontinuous as the supremum of continuous functions. We claim that the functon $v$ is strictly positive. Indeed, assume for contradiction that for some $x_0 \in G$ we would have
\[
v(x_0) = \int_G \rho(s)u(s^{-1}x_0) dm_G(s)  = 0
\]
Since the integrand is non-negative and $\rho>0$ this would imply that $u(x) = 0$ for $m_G$-almost every $x$, but then $\eta = u\, m_G = 0$, which is a contradiction.
\end{proof}
In view of the corollary we assume from now on that $u$ has been chosen to be positive and semicontinuous.
\begin{proof}[Proof of Theorem \ref{Unimodular}] We fix a compact set $K\subset G$ such that $G = \Lambda K = K\Lambda$. Since $u$ is lower semicontinuous and stricily positive we then have
\[
\delta := \inf_{k \in K} u(k) \delta(k) > 0.
\]
It follows from \cite[(5.2), p. 2957]{BH1} that there exists a finite set $F$ such that for all $\lambda \in \Lambda$ and $m_G$-almost all $g \in G$ we have
\[
u(g\lambda^{-1}) \Delta_G(\lambda^{-1}) \leq \sum_{c \in F} u(gc^{-1})\Delta_G(c^{-1}).
\]
Note that if $g$ satisfies this inequality and if we write $g = k\lambda$ with $k \in K$ and $\lambda \in \Lambda$, then
\[
\sum_{c \in F} u(gc^{-1})\Delta_G(c^{-1}) \geq u(g\lambda^{-1}) \Delta_G(\lambda^{-1}) = u(k)\Delta_G(k)\Delta_G(g^{-1}) \geq \delta \cdot \Delta_G(g)^{-1}.
\]
Since this holds for $m_G$-almost every $g\in G$, we deduce that for all $g\in G$ we have
\[
\int_G \left(\sum_{c \in F} u(s^{-1}gc^{-1})\Delta_G(c^{-1})\right) \rho^{\ast n}(s) dm_G(s)  \geq \delta \cdot \int_G  \Delta_G(s^{-1}g)^{-1}  \rho^{\ast n}(s) dm_G(s)
\]
By Lemma \ref{uStationary} the left-hand side equals
\[
\sum_{c \in F} \rho^{\ast n} \ast u(gc^{-1}) = \sum_{c \in C} u(gc^{-1}),
\]
whereas the right hand side equals to
\[
\delta \cdot \Delta_G(g)^{-1} \cdot \left(\int_G   \rho{^\ast n}(s) \delta(s) dm_G(s)\right) = \delta \cdot \Delta_G(g)^{-1} \cdot\left(\int_G \rho(s) \delta_G(s) \, dm_G(s)\right)^n,
\]
which diverges to $\infty$ by Property (iii) of Lemma \ref{LemmaUniNew}. We thus have established for every $g \in G$ that $\sum_{c \in C} u(gc^{-1}) = \infty$, contradicting the fact that $c$ is finite.
\end{proof}

\end{document}